\numberwithin{equation}{section}
\newtheorem{theorem}{Theorem}[section]
\newtheorem{lemma}[theorem]{Lemma}
\newtheorem{corollary}[theorem]{Corollary}
\newtheorem{definition}[theorem]{Definition}
\newtheorem{remark}[theorem]{Remark}
\newtheorem{assumption}[theorem]{Assumption}
\def\bfR{{\mathbb R}}
\def\pp{\mbox{\bf P}}
\def\ee{\mbox{\bf E}}
\def\dd{\mbox{\rm d}}
\newcommand*{\fatdot}{ \raisebox{-1pt}{\makebox[.5ex]{\textbf{$\cdot$}}} }
\begin{document}
\title{Extension Technique for Functions of Diffusion Operators:\\ a stochastic approach}
\author{Sigurd Assing \& 
John Herman\footnote{Supported by EPSRC funding as a part of the MASDOC DTC, Grant reference number EP/HO23364/1.}\\
Department of Statistics, The University of Warwick,
Coventry CV4 7AL, UK\\
e-mail: s.assing@warwick.ac.uk}
\date{}%19 November 2019}
\maketitle

\vspace{-1cm}
\begin{abstract}
It has recently been shown that complete Bernstein functions
of the Laplace operator map the  Dirichlet boundary condition
of a related elliptic PDE to the Neumann boundary condition.
The importance of this mapping consists in being able to convert problems 
involving non-local operators, like fractional Laplacians, 
into ones that only involve differential operators.
We generalise this result to diffusion operators
associated with stochastic differential equations,
using a method which is entirely based on stochastic analysis.
\end{abstract}
\noindent
{\large KEY WORDS}\hspace{0.2cm}
Dirichlet-to-Neumann map,
elliptic equation,
trace process,
Krein strings

\vspace{10pt}
\noindent
{\it MSC2010}:
Primary 60J45,60J60,60J55; Secondary 35J25,35J70,47G20.
\section{Introduction}
This paper is strongly motivated by Kwa\'snicki \&\ Mucha's recent work, \cite{KM2018}, 
where they look for functions $\psi$ such that $\psi(-\Delta)$
can be both well-defined and associated with a local PDE operator
via the so-called Dirichlet-to-Neumann map, by extension method.
Here, $\Delta$ stands for the Laplace operator on $\bfR^d$, 
and they found that the above works for 
complete Bernstein functions.

Our contribution to this problem is two-fold.

First, we show that, for the majority of complete Bernstein functions $\psi$,
the above Dirichlet-to-Neumann map also works
when $\Delta$ is replaced by a diffusion operator of type
\[
\sum_{i=1}^d a_i(x)\,\frac{\partial}{\partial x_i}
\,+\,
\frac{1}{2}\sum_{i,j=1}^d(\sigma\sigma^{\bf T})_{ij}(x)\,\frac{\partial^2}{\partial x_i \partial x_j}\,,
\]
where the coefficients are continuous functions on $\bfR^d$
satisfying a linear growth condition.

Second, our method of proof is based on a stochastic approach totally different to
the purely analytic approach taken in \cite{KM2018}. Though the basic ideas
for our stochastic approach have been around for a while in different contexts---see
below for more details---it is technically quite demanding to push the method
through for general complete Bernstein functions. In an abstract way,
the mechanism connecting the stochastic and analytic approach used by us
and the authors of \cite{KM2018}, respectively, 
is the one-to-one relationship 
of local times of generalised diffusions and complete Bernstein functions;
and this relationship was first verified in a rather hidden application
given in \cite{KW1982}.

In the remaining part of the introduction we put the Dirichlet-to-Neumann map
into context, in Section 2 we formulate our results, in Section 3 we prove them,
in Section 4 we discuss our conditions, and we finish the paper with some
auxiliary results given in the Appendix.

Consider a bounded continuous  function $u$ on $\bfR^d\times[0,\infty)$
which is harmonic in $\bfR^d\times(0,\infty)$, i.e.
\begin{equation}\label{firstPDE}
\Delta_x\,u(x,y)+\partial^2_y\,u(x,y)\,=\,0,
\quad(x,y)\in\bfR^d\times(0,\infty),
\end{equation}
where $\Delta_x$  denotes the Laplace operator acting on the $d$-dimensional
$x$-component of $u$. Then it has been
long\footnote{We could not track the first occasion of this classic result.}
known that the square root of the Laplacian, $-(-\Delta_x)^{1/2}$,
can map the Dirichlet boundary condition pointwise to the
Neumann boundary condition of the elliptic PDE (\ref{firstPDE}), i.e.
\[
-(-\Delta_x)^{1/2}\,u(\cdot,0)\,=\,\lim_{y\downarrow 0}\partial_y\,u(\cdot,y),
\]
and this map is the earliest example of the so-called 
Dirichlet-to-Neumann map.

This example of course triggered the question  
whether there are functions $\psi$, other than the square root,
such that $-\psi(-\Delta_x)$ maps the Dirichlet boundary condition to the 
Neumann boundary condition of an associated PDE,
and the first systematic answer to this question was given by
Caffarelli \&\ Silvestre in their 2007 paper \cite{CS2007}:
they were able to construct a mapping for any fractional Laplacian,
$-(-\Delta_x)^{\alpha/2},\,0<\alpha<2$,
their associated PDE read
\[
\Delta_x\,u
+\frac{1-\alpha}{y}\partial_y\,u
+\partial^2_y\,u\,=\,0,
\quad\mbox{in}\quad\bfR^d\times(0,\infty),
\]
and they proved 
\[
-(-\Delta_x)^{\alpha/2}\,u(\cdot,0)
\,=\,
c_\alpha\lim_{y\downarrow 0}\,[y^{1-\alpha}\partial_y\,u(\cdot,y)].
\]

So, they did not map the Dirichlet boundary condition to a proper 
Neumann boundary condition, they rather mapped it to a weighted
Neumann boundary condition.
However, the importance of their mapping consists in being able to convert
problems involving non-local operators, like fractional Laplacians,
into ones that only involve (maybe degenerate) differential operators.

Since then, in particular because of this link between non-local and local operators,
there has been huge activity generalising the above extension method in many ways, 
mostly replacing $\Delta_x$ by more general diffusion operators, 
but also considering equations in more general function spaces. 
Avoiding to quote every of these many activities, 
the made progress can be seen best when looking into applications, 
and we concentrate on the two areas of application we would value most:
analysing fractional obstacle problems (and related optimal stopping problems),
see \cite{S2007,CSS2008,CF2013,CRS2017,BFR2018};
proving Harnack inequalities and maximum principles with respect to fractional operators,
see \cite{CS2007,ST2010,SZ2013,FF2015,BMST2016,MS2017}.

However, in all this work,  the authors restricted themselves to fractional powers,
and the only work we are aware of, where functions $\psi$ other than fractional powers
were systematically treated, is Kwa\'snicki \&\ Mucha's recent paper \cite{KM2018}.
There is also a follow-up paper, \cite{K2019}, on ArXiv, by Kwa\'snicki,
and we will compare the results in both papers with ours in Section 4.
Below, we only add a few words on the origins of our stochastic approach.

The first paper to be mentioned is \cite{MO1969}, by Molchanov \&\ Ostrovskii,
in 1969. From the analytic point of view, this paper deals with
fractional Laplacians like the 2007-paper \cite{CS2007}, by Caffarelli \&\ Silvestre,
and they applied a technique which in some way is the stochastic analogue 
to the extension method.

While the heuristic idea of the extension method is to study an operator
acting on functions on $\bfR^d$ via a PDE in $\bfR^d\times(0,\infty)$,
the stochastic analogue of this idea would be to study the Markov process 
associated with that operator via a pair of stochastic processes
taking values in $\bfR^d\times(0,\infty)$. The process associated with 
the operator of interest would be the trace of the corresponding
pair of processes on the boundary $\bfR^d\times\{0\}$ of
$\bfR^d\times(0,\infty)$---see Section 2 for precise definitions 
within our context.

However, Molchanov \&\ Ostrovskii did not make a connection
between generators associated with trace processes 
and Neumann boundary conditions of solutions to PDEs.
The only paper we are  aware of, where this connection has been made
in a stochastic setting, is the 1986-paper \cite{H1986}, by Hsu.
Formally, the connection is easily made
by combining It\^o's formula and random time change.
The extra difficulty in Hsu's case was that,
instead of $\bfR^d\times(0,\infty)$,
he considered bounded domains in $\bfR^{d+1}$ 
with sufficiently smooth boundary.
In our case, because of the general setup,
the difficulty comes from a lack of regularity,
which has made it harder to apply It\^o's formula.

{\bf Acknowledgement}
We are grateful to the anonymous referee for their detailed report
which helped to clarify the presentation. Moreover, the fact, quoted in  Remark \ref{DtoNtrue}(a),
that uniform convergence can even follow from pointwise convergence,
under certain conditions,
was kindly pointed out to us by the referee.
\section{Results}\label{rr}
Our stochastic extension method is based on four ingredients.

First, let $X=[X(t),\,t\ge 0]$ be a diffusion, taking values in $\bfR^d$, 
which is a global solution of the SDE
\begin{equation}\label{SDE}
\dd X_i(t)
\,=\,
\sum_{k=1}^p\sigma_{ik}(X(t))\,\dd B_k(t)
+a_{i}(X(t))\,\dd t,
\quad
i=1,\dots,d,
\end{equation}
where $(B_1,\dots,B_p)^{\bf T}$ is a Brownian motion, 
$\sigma=(\sigma_{ik})$ is a $d\times p\;-\,$matrix of functions on $\bfR^d$,
and $a=(a_1,\dots,a_d)^{\bf T}$ is a vector field on $\bfR^d$. Of course, 
the infinitesimal operator associated with this diffusion formally reads
\begin{equation}\label{formal generator}
{\cal L}_x\,=\,a\cdot\nabla_{\!x}
\,+\,
\frac{1}{2}\sum_{i,j=1}^d(\sigma\sigma^{\bf T})_{ij}(x)\,\frac{\partial^2}{\partial x_i \partial x_j}\,.
\end{equation}
Assume that both coefficients $a,\sigma$ are continuous 
but also satisfy some growth condition to allow for global solutions of (\ref{SDE}).
%and $\sigma\sigma^{\bf T}$ uniformly elliptic in $\NR^d\times(0,\infty)$. 

Second, let $\hat{m}$ be a Krein string,
that is a monotone non-decreasing right-continuous function
$\hat{m}:[0,\infty]\to[0,\infty]$ such that $\hat{m}(\infty)=\infty$.
Either $\hat{m}\equiv\infty$,
or $\hat{m}$ is uniquely associated with a locally finite measure
on $([0,R),{\cal B}([0,R)))$, where
$R=\sup\{y\ge 0:\hat{m}(y)<\infty\}>0$.
If $R<\infty$, set $y_1=R$,
and otherwise, if the measure is not identically zero,
let $y_1>0$ be the right endpoint of its support in $[0,\infty)$.
This way, except for the trivial strings
$\hat{m}\equiv\infty$, and
$\hat{m}(y)=0,\,\forall y\in[0,\infty)$, 
any string is also uniquely associated with a measure on 
$([0,y_1]\cap\bfR,{\cal B}([0,y_1]\cap\bfR))$,
whose restriction to $[0,y_1)$ is locally finite.
In what follows, excluding the two trivial strings,
we are going to use the same notation $\hat{m}$ 
for both string and measure because it will always be clear 
from the context which one of the two objects is meant.
Unless stated otherwise, the measure-version of $\hat{m}$ refers to the measure on
$([0,y_1]\cap\bfR,{\cal B}([0,y_1]\cap\bfR))$.

Third, consider the elliptic PDE 
\begin{equation}\label{PDE}
{\cal L}_x u\times\hat{m}(\dd y)+\partial_y^2 u\,=\,0,
\quad\mbox{in}\quad
\bfR^d\times(0,y_1),
\end{equation}
where the product ${\cal L}_x u(x,y)\times\hat{m}(\dd y)$ is understood
in the sense of distributions with respect to $y$, for any fixed $x$.
We are looking for solutions of the following type:
\begin{definition}\rm\label{solution}
A function $u:\bfR^d\times(0,y_1)\to\bfR$ such that
\begin{itemize}
\item
$u(\cdot,y)\in C^2(\bfR^d)$, for any $y\in(0,y_1)$,
\item
$u(x,\cdot),\,\partial_i u(x,\cdot),\,\partial_{ij}u(x,\cdot),\,1\le i,j\le d$,
are c\`adl\`ag functions\footnote{In what follows, 
$\partial_i$ and $\partial_{ij}$ is short notation for 
$\partial/\partial x_i$ and $\partial^2/\partial x_i\partial x_j$, respectively.}
on $(0,y_1)$, for any $x\in\bfR^d$,
\end{itemize}
is said to be a {\it solution of/to (\ref{PDE})} if and only if
\[
\int_{(0,y_1)}{\cal L}_x u(x,y)\,g(y)\,\hat{m}(\dd y)
+
\int_0^{y_1}u(x,y)\,g''(y)\,\dd y
\,=\,
0,
\]
for any $x\in\bfR^d$, and any smooth function $g:(0,y_1)\to\bfR$ with compact support.
\end{definition}

Of course, if $u$ solves (\ref{PDE}) in the sense of Definition \ref{solution}, 
then $\partial_y^2\,u(x,\cdot)$ is a locally finite signed measure on $((0,y_1),{\cal B}((0,y_1)))$,
for any $x\in\bfR^d$. Therefore, if $x\in\bfR^d$ is fixed, the partial derivatives 
$\partial_y^+ u(x,y)$ (from right) and $\partial_y^- u(x,y)$ (from left) exist, 
for all $y\in(0,y_1)$, and
\begin{equation}\label{L dm integral}
\partial_y^2 u(x,(y_\star,y^\star])
\,=\,
\partial_y^+ u(x,y^\star)-\partial_y^+ u(x,y_\star)
\,=\,
-\int_{(y_\star,y^\star]}{\cal L}_x u(x,y)\,\hat{m}(\dd y),
\end{equation}
for all $y_\star,y^\star$ such that $0<y_\star<y^\star<y_1$.
\begin{remark}\rm\label{implicit conditions}
Existence of solutions to (\ref{PDE}) in the sense of Definition \ref{solution}
implicitly requires the coefficients of the operator ${\cal L}_x$ 
to be `good enough'. In this paper, the only explicit assumption 
on these coefficients is continuity. 
All other assumptions are made implicitly via properties of solutions to equations.
First, we require existence of global solutions to the SDE (\ref{SDE}),
but all other implicit assumptions will be made via properties
of solutions to the PDE (\ref{PDE}).
\end{remark}

Fourth, let $Y=[Y(t),\,t\ge 0]$ be another diffusion, which is independent of $X$,
and which takes values in the interval $[0,y_1]\cap\bfR$,
where $y_1>0$ can be finite or infinite.
Suppose that this diffusion is regular in $(0,y_1)$, 
right-singular but not absorbing at zero,
and left-singular at $y_1$.

Such a one\,-\,dimensional diffusion can be uniquely determined by 
a scale function and a speed measure---see \cite{AS1998} for a general account
on this theory based on stochastic calculus. For the purpose of this paper,
we choose the scale function to be the identity, 
and we want to construct $Y$ as in the proof of Lemma A.1 in the Appendix,
using a measure $\hat{m}$ on $([0,y_1]\cap\bfR,{\cal B}([0,y_1]\cap\bfR))$
which is associated with a non-trivial Krein string.

This construction is a special case of \cite[Theorem (6.5)]{AS1998},
and following the terminology given in \cite{AS1998},
this theorem tells us in particular
that the corresponding speed measure would not be $\hat{m}$ but
\[
m(\dd y)\,=\left\{\begin{array}{lcc}
\hat{m}(\{0\})\,\delta_0(\dd y)+\frac{1}{2}\hat{m}(\dd y){\bf 1}_{(0,y_1)}
&:&
\mbox{$y_1=\infty$, or $y_1<\infty$ absorbing,}\\
\rule{0pt}{12pt}
\hat{m}(\{0\})\,\delta_0(\dd y)+\frac{1}{2}\hat{m}(\dd y){\bf 1}_{(0,y_1)}
+\hat{m}(\{y_1\})\,\delta_{y_1}(\dd y)
&:&
\mbox{else},
\end{array}\right.
\]
where $m$ is defined on ${\cal B}([0,y_1))$ in the first case,
and on ${\cal B}([0,y_1])$ in the second.
But, according to \cite[Proposition (5.34)]{AS1998}, 
for our choice of regular and singular points,
this speed measure would have to satisfy:
\begin{itemize}\label{speedDefi}
\item[(S1)]
$m(K)<+\infty$, for any compact subset $K\subseteq[0,y_1)$;
\item[(S2)]
$m([0,y_1])<+\infty$, if $y_1<\infty$ is not absorbing;
\item[(S3)]
$m(O)>0$, for any open subset $O\subseteq(0,y_1)$.
\end{itemize}
So, we should ask whether the wanted regularity of $Y$
in terms of regular and singular points puts
a constraint on the type of strings for which our method works.

That said, by definition of $y_1$,
any non-trivial Krein string $\hat{m}$ is locally finite on $[0,y_1)$,
so (S1) is always satisfied, at least. 
Property (S2) is not restrictive either: 
strings which do not lead to this property of $m$ would be related to
the case where $y_1$ is absorbing for $Y$.
Property (S3) though, which is due to the wanted regularity of $Y$ in $(0,y_1)$,
requires the string to be strictly increasing on $(0,y_1)$.

In this paper, we therefore restrict ourselves to 
non-trivial Krein strings which are strictly increasing on $(0,y_1)$.
Note that, by \cite[Theorem (6.5)]{AS1998}, using such strings
in the construction described in the proof of Lemma A.1 in the Appendix
indeed gives diffusions $Y$ of the wanted regularity.

However, another constraint becomes necessary when 
the diffusion $Y$ is supposed to satisfy
a stochastic differential equation which is desirable for our technique.

Observe that any measure $m$ satisfying (S1-3) admits  
a Lebesgue decomposition into the sum of two singular measures,
one of which is absolutely continuous with respect to the Lebesgue measure. 
It is a remarkable consequence of \cite[Theorem (7.9)]{AS1998}
that $Y$ would only satisfy an appropriate stochastic differential equation if
\begin{itemize}
\item[(S4)]
the density of the absolutely continuous part
is Lebesgue-almost everywhere positive in $(0,y_1)$.
\end{itemize}

Assuming (S4), without restricting generality,
using a measurable function $b:[0,y_1]\cap\bfR\to\bfR$,
we can write the Lebesgue decompositions of both measures $m$ and $\hat{m}$
in the form
\[
m(\dd y)\,=\left\{\begin{array}{lcc}
\frac{1}{2}b^{-2}(y)\dd y+m_0\,\delta_0(\dd y)+\frac{1}{2}n(\dd y)
&:&
\mbox{$y_1=\infty$, or $y_1<\infty$ absorbing,}\\
\rule{0pt}{12pt}
\frac{1}{2}b^{-2}(y)\dd y+m_0\,\delta_0(\dd y)+\frac{1}{2}n(\dd y)+m_1\,\delta_{y_1}(\dd y)
&:&
\mbox{else},
\end{array}\right.
\]
and
\[
\hat{m}(\dd y)\,=\left\{\begin{array}{lcl}
b^{-2}(y)\dd y+m_0\,\delta_0(\dd y)+n(\dd y)
&:&
y_1=\infty,\\
\rule{0pt}{12pt}
b^{-2}(y)\dd y+m_0\,\delta_0(\dd y)+n(\dd y)+m_1\,\delta_{y_1}(\dd y)
&:&
y_1<\infty,
\end{array}\right.
\]
respectively,
where $m_0=\hat{m}(\{0\})=\hat{m}(0),\,m_1=\hat{m}(\{y_1\})=\hat{m}(y_1)-\hat{m}(y_1-)$,
and the measure $n$ satisfies $n([0,y_1]\cap\bfR\setminus{\cal N})=0$, 
for some Borel set ${\cal N}\subseteq(0,y_1)$ of Lebesgue measure zero.
Note that, since $b$ maps into $\bfR$, the density $b^{-2}(y)$ is positive, 
for all $y\in[0,y_1]\cap\bfR$.
Furthermore, for technical reasons, we set $b(y)=0$, 
for all $y\in({\cal N}\cup\{0,y_1\})\cap\bfR$,
where $b^{-2}(y)=+\infty$, if $b(y)=0$, as usual.

Eventually, if the Lebesgue decomposition of the speed measure $m$ 
is written in the above form, then \cite[Theorem (7.9)]{AS1998} yields 
that $Y$ solves an SDE of type
\begin{equation}\label{SDE-Y}
\dd Y(t)\,=\,\sqrt{2}\,b(Y(t))\,\dd B(t)+\dd L^0_t(Y)-\dd L^{y_1}_t(Y),
\end{equation}
where $B$ is a Brownian motion, 
and $[L^y_t(Y),\,t\ge 0]$ stands for the symmetric local time of $Y$ at $y\in[0,y_1]$.
Further details on how this local time is defined
can be found in the proof of Lemma A.1 in the Appendix. 
\begin{remark}\rm
(a)
Though we write the Lebesgue decomposition of the speed measure in a slightly different way,
\cite[Theorem (7.9)]{AS1998} can still be applied in our context.
Note that $L^{y_1}_t(Y)=0,\,t\ge 0$, if $y_1=\infty$,
and that one can assume that $(B_1,\dots,B_p)$ and $B$ are independent,
without loss of generality.

(b)
Of course, the law of $Y$ is uniquely determined by the chosen Krein string
via the corresponding speed measure. But equation (\ref{SDE-Y}) will always
admit solutions with other laws, too, because the coefficient $b$
has got at least one zero.
\end{remark}

Our first goal is to establish a version of It\^o's lemma for $u(X(\cdot),Y(\cdot))$,
when $u$ is a solution to (\ref{PDE}). Solutions of (\ref{PDE}) are jointly
measurable, they are continuous in $x$, and also continuous in $y$ 
(recall that $\partial_y^\pm u$ do exist),
but they might \underline{not} be jointly continuous.
This suggests that more regularity than stated in Definition \ref{solution} 
would be needed for It\^o's lemma to hold true.
We should nonetheless try to keep assumptions on the regularity of $u$
as `weak' as possible. Adding the following condition seems to be enough.
\begin{assumption}\rm\label{BCxreg}
%For a solution $u$ of (\ref{PDE}), 
The functions
$u,\,\partial_i u,\,\partial_{ij}u,\,1\le i,j\le d$,
can be extended to locally bounded functions on 
$\bfR^d\times\left(\rule{0pt}{11pt}[0,y_1]\cap\bfR\right)$ 
by taking the limits
\[
\mbox{$\lim_{y\downarrow 0}$}\,u(x,y),\quad
\mbox{$\lim_{y\downarrow 0}$}\,\partial_i u(x,y),\quad
\mbox{$\lim_{y\downarrow 0}$}\,\partial_{ij}u(x,y),
\]
and
\[
\mbox{$\lim_{y\uparrow y_1}$}\,u(x,y),\quad
\mbox{$\lim_{y\uparrow y_1}$}\,\partial_i u(x,y),\quad
\mbox{$\lim_{y\uparrow y_1}$}\,\partial_{ij}u(x,y),
\]
at any fixed $x\in\bfR^d$,
where the latter three limits are only taken when $y_1<\infty$.
\end{assumption}
\begin{remark}\rm\label{afterBCxreg}
Under the above assumption, the particular limit of $u(x,y),\,y\downarrow 0$, exists, 
which we will also denote by $f(x)$. So $f:\bfR^d\to\bfR$ is a locally bounded
measurable function which plays the role of a Dirichlet boundary condition for the solution $u$.
In what follows, to emphasise this role, we will always write $u_f$ for the extension of $u$,
when Assumption \ref{BCxreg} is assumed. Using $u_f(x,0)$ as an alternative notation  
for the limit of $u_f(x,y),\,y\downarrow 0$, leads to the wanted equality 
$u_f(x,0)\,=\,f(x),\,x\in\bfR^d$. The other limits taken under Assumption \ref{BCxreg}
are going to be denoted by
$\partial_i u_f(x,0),\,\partial_{ij}u_f(x,0)$ and, when $y_1<\infty$, by
$u_f(x,y_1-),\,\partial_i u_f(x,y_1-),\,\partial_{ij}u_f(x,y_1-)$,
for all $x\in\bfR^d,\,1\le i,j\le d$.
The notations $\mathcal{L}_x u_f(\cdot, 0)$ and ${\cal L}_x u_f(\cdot,y_1-)$
used further below would refer to these limits, too.
\end{remark}
\begin{corollary}\label{BCyreg}
Let $u_f$ be the extension of
a solution $u$ to (\ref{PDE}) satisfying Assumption \ref{BCxreg}.
Then, the limit
$\partial_y^+ u_f(x,0)=\lim_{y\downarrow 0}\partial_y^+ u(x,y)$ exists,
and this limit extends $\partial_y^+ u$
to a locally bounded function on $\bfR^d\times[0,y_1)$,
which is c\`adl\`ag in $y$, for any fixed $x\in\bfR^d$.

If both $y_1<\infty$ $\&$ $\hat{m}([0,y_1))<+\infty$,
then the limit
$\partial_y^- u_f(x,y_1-)=\lim_{y\uparrow y_1}\partial_y^+ u(x,y)$ 
also exists, extending $\partial_y^- u$
to a locally bounded function on $\bfR^d\times(0,y_1]$,
which is c\`agl\`ad in $y$, for any fixed $x\in\bfR^d$.
\end{corollary}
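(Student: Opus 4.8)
The plan is to deduce everything from the integrated form (\ref{L dm integral}) of the equation — used once and then once more — together with one elementary consequence of Assumption \ref{BCxreg}: for each fixed $x$ the map $z\mapsto\mathcal{L}_x u(x,z)=a(x)\cdot\nabla_{\!x}u(x,z)+\tfrac12\sum_{i,j}(\sigma\sigma^{\bf T})_{ij}(x)\,\partial_{ij}u(x,z)$ is c\`adl\`ag on $(0,y_1)$, has the finite limit $\mathcal{L}_x u_f(x,0)$ of Remark \ref{afterBCxreg} as $z\downarrow 0$, and, when $y_1<\infty$, a finite limit as $z\uparrow y_1$; in particular it is bounded on every $(0,y^\star]$ with $y^\star<y_1$, and — in the setting of the second part — on all of $(0,y_1)$. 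Local boundedness in $x$, uniform over the relevant $y$-range, will be read off from Assumption \ref{BCxreg} (which makes $u,\partial_i u,\partial_{ij}u$ bounded on compact subsets of $\bfR^d\times([0,y_1]\cap\bfR)$) together with continuity of $a,\sigma$.

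\emph{First part.} Fix $x$ and $y^\star\in(0,y_1)$. Since $\hat m$ is locally finite on $[0,y_1)$, $\hat m((0,y^\star])<\infty$, so dominated convergence against $\hat m\big|_{(0,y^\star]}$ — the integrand $\mathcal{L}_x u(x,\cdot)$ being bounded on $(0,y^\star]$ — gives $\int_{(y_\star,y^\star]}\mathcal{L}_x u(x,z)\,\hat m(\dd z)\to\int_{(0,y^\star]}\mathcal{L}_x u(x,z)\,\hat m(\dd z)$ as $y_\star\downarrow 0$. By (\ref{L dm integral}), $\partial_y^+ u(x,y_\star)=\partial_y^+ u(x,y^\star)+\int_{(y_\star,y^\star]}\mathcal{L}_x u(x,z)\,\hat m(\dd z)$, so the left-hand side converges; call the limit $\partial_y^+ u_f(x,0)$. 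For fixed $x$, $y\mapsto\partial_y^+ u(x,y)$ differs on $(0,y_1)$ from a constant by $y\mapsto-\int_{(y_\star,y]}\mathcal{L}_x u(x,z)\,\hat m(\dd z)$, which is c\`adl\`ag (a difference of cumulative functions of locally finite positive measures), and right-continuity at $0$ is the limit just found; this is the stated c\`adl\`ag extension of $\partial_y^+ u$.

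\emph{Local boundedness} is the one point not already implicit in the hypotheses, and is where the work sits. Enclose a given compact subset of $\bfR^d\times[0,y_1)$ in $K\times[0,c]$ with $K\subset\bfR^d$ compact and $c<y_1$, and choose $c<c'<c''<y_1$. For $0\le y\le c$, (\ref{L dm integral}) (and its limiting form at $y=0$) gives
\[
\partial_y^+ u_f(x,y)=\partial_y^+ u(x,c')-\int_{(y,c']}\mathcal{L}_x u(x,z)\,\hat m(\dd z),
\]
whose integral is at most $\hat m((0,c'])\,\sup_{z\in(0,c']}|\mathcal{L}_x u(x,z)|$ in modulus; since $u,\partial_i u,\partial_{ij}u$ are bounded on the compact box $K\times[0,c']$ by Assumption \ref{BCxreg} and $a,\sigma$ are bounded on $K$, this is uniformly bounded over $x\in K$, while $\hat m((0,c'])<\infty$. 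It remains to bound $\partial_y^+ u(x,c')$ over $x\in K$; integrating (\ref{L dm integral}) a second time over $y\in[c',c'']$ and using that $\partial_y^2 u(x,\cdot)$ being a signed measure makes $u(x,c'')-u(x,c')=\int_{c'}^{c''}\partial_y^+ u(x,z)\,\dd z$, one solves
\[
\partial_y^+ u(x,c')=\frac1{c''-c'}\Big(u(x,c'')-u(x,c')+\int_{(c',c'']}(c''-z)\,\mathcal{L}_x u(x,z)\,\hat m(\dd z)\Big),
\]
and every term on the right is uniformly bounded over $x\in K$ by the same boundedness facts applied on $K\times[c',c'']$.

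\emph{Second part.} Assume $y_1<\infty$ and $\hat m([0,y_1))<\infty$. Fix $x$ and $y_\star\in(0,y_1)$. Now $\mathcal{L}_x u(x,\cdot)$ is bounded on all of $(y_\star,y_1)$ and $\hat m((y_\star,y_1))\le\hat m([0,y_1))<\infty$, so dominated convergence gives $\int_{(y_\star,y^\star]}\mathcal{L}_x u(x,z)\,\hat m(\dd z)\to\int_{(y_\star,y_1)}\mathcal{L}_x u(x,z)\,\hat m(\dd z)$ as $y^\star\uparrow y_1$, whence $\lim_{y\uparrow y_1}\partial_y^+ u(x,y)$ exists by (\ref{L dm integral}). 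Since $\hat m([0,y_1))<\infty$ forces $\hat m(\{y\})\to0$ as $y\uparrow y_1$ while $\mathcal{L}_x u(x,\cdot)$ stays bounded there, the jump $\partial_y^+ u(x,y)-\partial_y^- u(x,y)=-\mathcal{L}_x u(x,y)\,\hat m(\{y\})$ tends to $0$, so $\partial_y^- u(x,y)$ shares the same limit; naming it $\partial_y^- u_f(x,y_1-)$ gives left-continuity at $y_1$, and the c\`agl\`ad property of $\partial_y^- u(x,\cdot)$ on $(0,y_1)$, which there coincides with the left-limit function of $\partial_y^+ u(x,\cdot)$, is standard. Local boundedness on $\bfR^d\times(0,y_1]$ follows exactly as in the first part: enclose a compact set in $K\times[c,y_1]$ with $c>0$, pick $c_0\in(0,c)$, write $\partial_y^- u_f(x,y)=\partial_y^+ u(x,c_0)-\int_{(c_0,y)}\mathcal{L}_x u(x,z)\,\hat m(\dd z)$ for $c_0<y\le y_1$, bound the integral by $\hat m((c_0,y_1))\sup_{z\in[c_0,y_1)}|\mathcal{L}_x u(x,z)|$ (uniformly over $x\in K$ by Assumption \ref{BCxreg} on $K\times[c_0,y_1]$ and continuity of $a,\sigma$), and control $\partial_y^+ u(x,c_0)$ over $x\in K$ by the local boundedness already proved. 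I expect the main obstacle to be precisely this local boundedness in $x$ of the $y$-derivative: Definition \ref{solution} and Assumption \ref{BCxreg} directly control only $u$ and its $x$-derivatives, so $\partial_y^+ u(x,c')$ must be recovered from the twice-integrated equation as a combination of boundary-type values of $u$; everything else is bookkeeping with dominated convergence, the one genuine input being the dichotomy that $\hat m$ is automatically finite near $0$ but only finite near $y_1$ under the extra hypothesis.
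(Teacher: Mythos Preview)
Your proof is correct and follows the same overall structure as the paper's: both derive everything from the integrated identity (\ref{L dm integral}), first establishing local boundedness of $\partial_y^+ u(\cdot,y^\star)$ at an interior point and then propagating to the boundary by dominated convergence.

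The one genuine difference is in how you handle the key step, local boundedness of $x\mapsto\partial_y^+ u(x,c')$. The paper argues by contradiction: assuming a sequence $x_n\to x$ with $\partial_y^+ u(x_n,y^\star)\to+\infty$, it uses (\ref{L dm integral}) to push this lower bound uniformly over a $y$-interval and then integrates once to force $u(x_n,y_\star)\to-\infty$, contradicting continuity of $u(\cdot,y_\star)$. You instead integrate (\ref{L dm integral}) once more in $y$ over $[c',c'']$, apply Fubini, and solve explicitly for
\[
\partial_y^+ u(x,c')=\frac{1}{c''-c'}\Big(u(x,c'')-u(x,c')+\int_{(c',c'']}(c''-z)\,\mathcal{L}_x u(x,z)\,\hat m(\dd z)\Big),
\]
which is manifestly locally bounded in $x$ by Assumption \ref{BCxreg}. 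Your route is more direct and constructive; the paper's contradiction argument is a touch more robust in that it never needs to justify the Fubini step or the fundamental-theorem identity $u(x,c'')-u(x,c')=\int_{c'}^{c''}\partial_y^+ u(x,z)\,\dd z$ (though both are unproblematic here since, for fixed $x$, (\ref{L dm integral}) already makes $\partial_y^+ u(x,\cdot)$ locally of bounded variation). Either way the substance is the same.
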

\begin{remark}\rm
This corollary is fairly standard, so we only sketch its proof in the Appendix.
This sketch also makes clear that, once all $x$-direction second order partial derivatives
$\partial_{ij}u,\,1\le i,j\le d$, are locally bounded in $\bfR^d\times(0,y_1)$,
then $u,\,\partial_i u,\,1\le i\le d,\,\partial_y^\pm u$ are necessarily 
locally bounded in $\bfR^d\times(0,y_1)$, too.
Assumption \ref{BCxreg} rather puts conditions on how 
the $x$-direction partial derivatives behave near the boundary of $\bfR^d\times(0,y_1)$.
\end{remark}

Now, for fixed $x\in\bfR^d$, 
let $(\Omega,{\cal F},\pp_{\!\!x})$ be a complete probability space big enough
to carry all random variables $X,Y,(B_1,\dots,B_p),B$, as described above, with $X,Y$
starting at $X(0)\stackrel{a.s.}{=}x,\,Y(0)\stackrel{a.s.}{=}0$, respectively.
Moreover, we choose a
suitable\footnote{That is, chosen according to the technical assumptions
the results used throughout this paper rely upon.}
filtration, all processes are adapted to, and all stopping times refer to.
The following stopping times,
\[
\tau_r(Y)\,\stackrel{\rm def}{=}\,\inf\{t\ge 0:Y(t)=r\},
\] 
with respect to levels $r\ge 0$, will be frequently used, 
with respect to $Y$, but also with respect to other one-dimensional processes,
for example $\tau_r(\|X\|)$ with respect to $\|X\|$ etc.
\begin{lemma}\label{ito}
Let $u_f$ be the extension of a solution $u$ to (\ref{PDE}) 
satisfying Assumption \ref{BCxreg}.	

(a) If $y_1=\infty$,
\begin{align*}
&u_f(X(t),Y(t))-f(x)
\,=
\int_0^t\partial_y^+ u_f(X(s),0)\,\dd L_s^0(Y)
\,+
\int_0^t{\cal L}_x u_f(X(s),0)\,{\bf 1}_{\{0\}}(Y(s))\,\dd s\\
+&
\sum_{i=1}^d\sum_{k=1}^p\int_0^t\partial_i u_f(X(s),Y(s))\,\sigma_{ik}(X(s))\,\dd B_k(s)
\,+
\sqrt{2}\int_0^t\partial_y^+ u_f(X(s),Y(s))\,b(Y(s))\,\dd B(s),
\end{align*}
for all $t\ge 0$, a.s.

(b) If $y_1<\infty$, and $Y$ is absorbing at $y_1$,
\begin{align*}
&\,u_f(X(t),Y(t))\,{\bf 1}_{\{t<\tau_{y_1}(Y)\}}
+u_f(X(t),y_1-)\,{\bf 1}_{\{t\geq\tau_{y_1}(Y)\}}
-f(x)\\
=\,&
\int_0^t\partial_y^+ u_f(X(s),0)\,\dd L_s^0(Y)\\
+\,&
\int_0^t{\cal L}_x u_f(X(s),0)\,{\bf 1}_{\{0\}}(Y(s))\,\dd s
\,+
\int_0^t{\bf 1}_{\{s\geq\tau_{y_1}(Y)\}}\,{\cal L}_x u_f(X(s),y_1-)\,\dd s\\
+\,&
\sum_{i=1}^d\sum_{k=1}^p\int_0^t{\bf 1}_{\{s<\tau_{y_1}(Y)\}}\,
\partial_i u_f(X(s),Y(s))\,\sigma_{ik}(X(s))\,\dd B_k(s)\\
+\,&
\sum_{i=1}^d\sum_{k=1}^p\int_0^t{\bf 1}_{\{s\geq\tau_{y_1}(Y)\}}\,
\partial_i u_f(X(s),y_1-)\,\sigma_{ik}(X(s))\,\dd B_k(s)\\
+\,&
\sqrt{2}\int_0^t{\bf 1}_{\{s<\tau_{y_1}(Y)\}}\,
\partial_y^+ u_f(X(s),Y(s))\,b(Y(s))\,\dd B(s),
\end{align*}
for all $t\ge 0$, a.s.

(c) If $y_1<\infty$, and $Y$ is not absorbing at $y_1$,
and $\partial_y^- u_f(\cdot,y_1-)$ is a continuous function on $\bfR^d$,
\begin{align*}
&\,u_f(X(t),Y(t))\,{\bf 1}_{\{Y(t)<y_1\}}
+u_f(X(t),y_1-)\,{\bf 1}_{\{y_1\}}(Y(t))
-f(x)\\
=\,&
\int_0^t\partial_y^+ u_f(X(s),0)\,\dd L_s^0(Y) - \int_0^t\partial_y^- u_f(X(s),y_1-)\,\dd L_s^{y_1}(Y)\\
+\,&
\int_0^t{\cal L}_x u_f(X(s),0)\,{\bf 1}_{\{0\}}(Y(s))\,\dd s
\,+
\int_0^t{\cal L}_x u_f(X(s),y_1-)\,{\bf 1}_{\{y_1\}}(Y(s))\,\dd s\\
+\,&
\sum_{i=1}^d\sum_{k=1}^p\int_0^t\partial_i u_f(X(s),Y(s))\,\sigma_{ik}(X(s))\,{\bf 1}_{\{Y(s)<y_1\}}\,\dd B_k(s)\\
+\,&
\sum_{i=1}^d\sum_{k=1}^p\int_0^t\partial_i u_f(X(s),y_1-)\,\sigma_{ik}(X(s))\,{\bf 1}_{\{y_1\}}(Y(s))\,\dd B_k(s)\\
+\,&
\sqrt{2}\int_0^t\partial_y^+ u_f(X(s),Y(s))\,b(Y(s))\,\dd B(s),
\end{align*}
for all $t\ge 0$, a.s.
\end{lemma}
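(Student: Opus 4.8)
\medskip
\noindent\textbf{Idea of proof.}
The plan is to derive all three identities from the classical multi-dimensional It\^o formula applied to a mollification of $u_f$ in the $y$-variable, and then to remove the mollification; the independence of $X$ and $Y$ is what makes this work. I describe (a) in detail and only indicate the changes for (b) and (c).

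Fix a non-negative mollifier $\rho$ on $\bfR$ with $\int\rho=1$ and support in $[-1,0]$, put $\rho_\varepsilon(\fatdot)=\varepsilon^{-1}\rho(\fatdot/\varepsilon)$, and set $u^\varepsilon(x,y)=\int_{\bfR}u_f(x,y-z)\,\rho_\varepsilon(z)\,\dd z$; thanks to the one-sided support this is well defined for $(x,y)\in\bfR^d\times[0,y_1)$. Differentiating under the integral, which is legitimate because $\partial_i u,\partial_{ij}u$ are continuous in $x$ and, by Corollary \ref{BCyreg}, locally bounded, shows that $u^\varepsilon$ is $C^2$ in $x$ with ${\cal L}_x u^\varepsilon(x,y)=\int\rho_\varepsilon(y-w)\,{\cal L}_x u(x,w)\,\dd w$, and of course $C^\infty$ in $y$. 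Moreover, testing the weak formulation in Definition \ref{solution} against $g=\rho_\varepsilon(y-\fatdot)$ gives the key identity
\[
\partial_y^2 u^\varepsilon(x,y)\,=\,-\int_{(0,y_1)}\rho_\varepsilon(y-w)\,{\cal L}_x u(x,w)\,\hat m(\dd w)
\]
for $y\in(0,y_1)$ and $\varepsilon$ small. Since $X$ and $Y$ are independent, the martingale parts of the $X_i$ (from (\ref{SDE})) and of $Y$ (from (\ref{SDE-Y})) have vanishing mutual bracket, so the classical It\^o formula applied to $u^\varepsilon(X(\fatdot),Y(\fatdot))$, together with $\dd\langle Y\rangle_t=2b^2(Y(t))\,\dd t$ and $L^{y_1}_t(Y)\equiv 0$, produces \emph{precisely} the two stochastic integrals of (a), with $u^\varepsilon$ in place of $u_f$, the term $\int_0^t\partial_y u^\varepsilon(X(s),0)\,\dd L^0_s(Y)$, and the Lebesgue drift $\int_0^t[{\cal L}_x u^\varepsilon(X(s),Y(s))+b^2(Y(s))\,\partial_y^2 u^\varepsilon(X(s),Y(s))]\,\dd s$. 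To keep all this legitimate I would first localise at $\tau_n\stackrel{\rm def}{=}n\wedge\tau_n(\|X\|)\wedge\tau_n(Y)$ --- on $[0,\tau_n]$ the coefficients of (\ref{SDE}), (\ref{SDE-Y}) as well as $u_f,\partial_i u_f,\partial_{ij}u_f$ and $\partial_y^+u_f$ are bounded --- prove the identity on $[0,t\wedge\tau_n]$, and let $n\to\infty$ at the end.

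The single non-routine passage to the limit $\varepsilon\downarrow0$ is the one for the Lebesgue drift, and this is the heart of the matter. One cannot take the limit inside the time integral pointwise. Instead I would substitute the displayed identity for $\partial_y^2 u^\varepsilon$, condition on the path of $X$ (allowed since $X\perp Y$), and apply the occupation-times formula for $Y$ (see \cite{AS1998}) together with the structural facts that $\hat m(\fatdot)=2\,m(\fatdot)$ on $(0,y_1)$, that $\dd\langle Y\rangle_t=2b^2(Y(t))\,\dd t$, and that $b\equiv 0$ on $({\cal N}\cup\{0,y_1\})\cap\bfR$, where $m$ is the speed measure of $Y$ and $\ell^{\fatdot}_t(Y)$ its jointly continuous diffusion local time, which on $(0,y_1)$ coincides with the semimartingale local time. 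These combine to give $\int_0^t b^2(Y(s))\,\partial_y^2 u^\varepsilon(X(s),Y(s))\,\dd s\to-\int_0^t{\bf 1}_{(0,y_1)}(Y(s))\,{\cal L}_x u_f(X(s),Y(s))\,\dd s$, which cancels the $\{Y(s)\in(0,y_1)\}$-part of $\int_0^t{\cal L}_x u^\varepsilon(X(s),Y(s))\,\dd s\to\int_0^t{\cal L}_x u_f(X(s),Y(s))\,\dd s$ and leaves exactly $\int_0^t{\cal L}_x u_f(X(s),0)\,{\bf 1}_{\{0\}}(Y(s))\,\dd s$. The remaining limits are easy: since $\rho$ is supported on the left and $\partial_i u_f(x,\fatdot)$, $\partial_{ij}u_f(x,\fatdot)$ and $\partial_y^+ u_f(x,\fatdot)$ are c\`adl\`ag (Definition \ref{solution}, Corollary \ref{BCyreg}), the corresponding derivatives of $u^\varepsilon$ converge pointwise on $[0,y_1)$ and stay locally bounded, so the stochastic integrals and the local-time term converge in $L^2$, with $\partial_y^+$ appearing rather than $\partial_y^-$; the factor $b(Y(s))$ in the last stochastic integral vanishes precisely on ${\cal N}\cup\{0\}$, where $\partial_y^+u_f$ and $\partial_y^-u_f$ may disagree, so this choice does not matter.

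For (b) and (c) the same scheme is used. In (b) I would run the argument above on $[0,t\wedge\tau_{y_1-1/n}(Y)]$ and let $n\to\infty$ to get the claim on $[0,\tau_{y_1}(Y))$; on $[\tau_{y_1}(Y),\infty)$ one has $Y\equiv y_1$, so it is enough to apply It\^o in the $x$-direction to $u(X(\fatdot),y)$ for $y<y_1$ and let $y\uparrow y_1$, using Assumption \ref{BCxreg} to carry the limit through, and the two pieces glue at $\tau_{y_1}(Y)$ because $u_f(X(t),Y(t))\to u_f(X(\tau_{y_1}(Y)),y_1-)$ as $t\uparrow\tau_{y_1}(Y)$ by the same assumption. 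In (c) the diffusion $Y$ is reflected at $y_1$ too, so the regularisation is extended symmetrically to that endpoint (adapting the one-sided mollifier near $y_1$ so that $u^\varepsilon$ stays defined); the reflection produces the term $-\int_0^t\partial_y^- u_f(X(s),y_1-)\,\dd L^{y_1}_s(Y)$, and the continuity of $\partial_y^- u_f(\fatdot,y_1-)$ on $\bfR^d$ is used to justify the limit in this stochastic integral against $\dd L^{y_1}_t(Y)$, while the atom $m_1\,\delta_{y_1}$ of $\hat m$ supplies the last Lebesgue term $\int_0^t{\cal L}_x u_f(X(s),y_1-)\,{\bf 1}_{\{y_1\}}(Y(s))\,\dd s$. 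The main obstacle in all three cases is the limit of the Lebesgue drift just described: this is where the lack of joint continuity of $u$ is felt, where the singular part $n$ of the speed measure --- which does carry strictly positive occupation time for $Y$ --- has to be shown to contribute nothing to the net drift, and where the precise normalisations linking $\hat m$, $m$, $\langle Y\rangle$ and the local times of $Y$ must all be used together.
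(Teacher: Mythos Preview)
Your overall strategy---mollify in $y$, apply classical It\^o, localise, and pass to the limit---is exactly the paper's. The substantive difference is how you handle the drift limit. The paper does \emph{not} use an occupation--times formula for $Y$; it goes through the explicit time--change construction $Y(t)=W(A_t^{-1})$ with $A_t=\frac12\int L_t^y(W)\,\hat m(\dd y)$, rewrites $\int_0^{t_N}b^2(Y(s))\,\partial_y^2 u^\varepsilon\,\dd s$ as an integral in $W$--time against $\dd A_s$, expands $\dd A_s$ via the Lebesgue decomposition of $\hat m$, and then exploits that the factor $b^2(y)$ kills the $\delta_0$ and singular--$n$ contributions, leaving an integral against Lebesgue $\dd y$ and the jointly continuous Brownian local time $L^y_s(W)$. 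Only \emph{after} the $\varepsilon$--limit is $\hat m$ reintroduced, and the conversion back to a $\dd s$--integral uses nothing more than the definition of $A_t$.

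Your route asks for an occupation--times formula for $Y$ against $\dd s$, with a time--dependent integrand (so ``conditioning on $X$'' does not reduce to the time--homogeneous case), and then for the identification of the semimartingale local time of $Y$ with its diffusion local time \emph{for $\hat m$--a.e.\ $y$}, including on the singular set $\mathcal N$ where $b\equiv 0$. That coincidence is only immediate Lebesgue--a.e., and it is exactly on $\mathcal N$---which carries positive $n$--mass and positive occupation time for $Y$---that the two notions could disagree; the paper sidesteps this by passing to $W$, whose local time is jointly continuous in $(y,t)$. So your argument for the drift is at best incomplete: either you need to prove that coincidence directly, or you should use the time change as the paper does.

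For (b), your plan to glue at $\tau_{y_1}(Y)$ requires $u_f(X(t),Y(t))\to u_f(X(\tau_{y_1}(Y)),y_1-)$ as $t\uparrow\tau_{y_1}(Y)$, which is a \emph{joint} limit and is not granted by Assumption~\ref{BCxreg} (only pointwise--in--$y$ limits at fixed $x$ are assumed). The paper avoids this by working throughout with the truncated function $u_{f,h}(x,y)=u_f(x,y\wedge(y_1-\tfrac h2))$, running the full It\^o argument for $u_{f,h}^\varepsilon(X,Y_h)$ with $Y_h=Y(\cdot\wedge\tau_{y_1-h}(Y))$, and taking $\varepsilon\downarrow 0$ then $h\downarrow 0$; the delicate point is the last stochastic integral, whose $L^2$--limit is obtained indirectly from the convergence of all other terms. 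For (c) the paper also uses $u_{f,h}$, and the reflection at $y_1$ enters through the artificial jump $\Delta[\partial_y^+u_{f,h}(\cdot,y_1-\tfrac h2)]$, whose contribution is shown to converge to the $L^{y_1}(Y)$--term using continuity of $\partial_y^-u_f(\cdot,y_1-)$ and weak convergence of $L^{y_1-h/2}_\cdot(W)$ to $L^{y_1}_\cdot(W)$; your sketch does not indicate how this term is produced.
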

\begin{remark}\rm\label{goody}
If $y_1<\infty$ is not absorbing,  
then (S2) on page \pageref{speedDefi} states ${m}([0,y_1])<+\infty$,
and hence $\hat{m}([0,y_1])<+\infty$, 
by simply taking into account how $m$ was defined in terms of $\hat{m}$.
Thus, the representation of
$\partial_y^- u_f(\cdot,y_1-)$ given in the proof of Corollary \ref{BCyreg}
can be used to show the implication: 
if $\partial_y^- u_f(\cdot,y_1-)$ is a continuous function on $\bfR^d$,
then $\partial_y^+ u(\cdot,y^\star)$ would be one, too,
for any interior value $y^\star\in(0,y_1)$.
Indeed, this implication follows by dominated convergence
combining Assumption \ref{BCxreg} and $u(\cdot,y)\in C^2(\bfR^d),\,y\in(0,y_1)$.
Vice versa, if $\partial_y^+ u(\cdot,y^\star)$ was continuous,
for some $y^\star\in(0,y_1)$, then $\partial_y^- u_f(\cdot,y_1-)$ would be, too.
All in all, stating $\partial_y^- u_f(\cdot,y_1-)\in C(\bfR^d)$
is equivalent to stating $\partial_y^+ u(\cdot,y)\in C(\bfR^d)$,
for all $y\in(0,y_1)$. Moreover, by similar arguments, 
$\partial_y^+ u_f(\cdot,0)\in C(\bfR^d)$ is also equivalent to
$\partial_y^+ u(\cdot,y)\in C(\bfR^d)$, for all $y\in(0,y_1)$,
and hence $\partial_y^+ u_f(\cdot,0)\in C(\bfR^d)$ implies
$\partial_y^- u_f(\cdot,y_1-)\in C(\bfR^d)$, in particular.
\end{remark}

Next, observe that the pair of random variables $(X,Y)$ describes a 
stochastic process on $(\Omega,{\cal F},\pp_{\!\!x})$
taking values in 
$\bfR^d\times\left(\rule{0pt}{12pt}\right.[0,y_1]\cap\bfR\left.\rule{0pt}{12pt}\right)$.
This process is associated with a so-called {\it trace process},
$Z=[Z(t),\,t\ge 0]$, which is the trace of the process $(X,Y)$ 
when touching the hyperplane 
$\{(x,0):x\in\bfR\}
\subseteq
\bfR^d\times\left(\rule{0pt}{12pt}\right.[0,y_1]\cap\bfR\left.\rule{0pt}{12pt}\right)$,
and which can be given by
\begin{equation}\label{defiZ}
Z(t)\,=\,X(T_t),\quad t\ge 0,
\end{equation}
where $[T_t,\,t\ge 0]$ denotes the right-inverse of the symmetric local time
$[L^0_t(Y),\,t\ge 0]$ of Y at zero. 

Of course, if $L^0_{\infty}(Y) = \lim_{t \to \infty} L^0_t(Y) < +\infty$,
then $T_t=+\infty$, for all $t\ge L^0_{\infty}(Y)$,
and $Z$ would be a killed process 
with finite life time $L^0_{\infty}(Y)$. We denote its cemetery-state by $\dagger$,
which is added to $\bfR^d$ in the usual way,
and we also define $X(\infty)=\dagger$, to be consistent with (\ref{defiZ}).
Any function $f:\bfR^d\to\bfR$ is also considered a function
$f:\bfR^d\cup\{\dagger\}\to\bfR$ by putting $f(\dagger)=0$.

The purpose of Lemma \ref{ito} is to be able to work out the limit of
$\ee_x[f(Z(t)) - f(x)]/t$, when $t\downarrow 0$,
for any fixed $x\in\bfR^d$,
and any bounded function $f:\bfR^d\to\bfR$ of a certain degree of regularity.
For the corresponding result, which is the main result of this paper,
we have to differ between infinite and finite $y_1$, but also between 
$y_1$ is absorbing and not-absorbing in the case of finite $y_1$.
Moreover, because of %the zero-one law
$\pp_{\!\!x}(\{\tau_{y_1}(Y)<+\infty\})\in\{0,1\}$,
see \cite[Lemma (2.9)]{AS1998} for example,
the case of $y_1$ is absorbing splits into two further cases.

We first describe the above cases analytically in terms of the Krein string
defining $Y$, and then we state the main theorem.
Note that (S4) is only needed for the theorem, not the lemma,
because only the proof of the theorem requires the semimartingale decomposition of $Y$
given by (\ref{SDE-Y}).
Also, (S4) obviously implies that $\hat{m}$ is strictly increasing on $(0,y_1)$.
\begin{lemma}\label{analytic for Y}
Choose a non-trivial Krein string $\hat{m}$ which is strictly increasing on $(0,y_1)$,
and construct the diffusion $Y$ as in the proof of Lemma A.1 in the Appendix.
Assume $y_1<\infty$. Then:
\begin{itemize}
\item[(i)]
$Y$ is absorbing at $y_1$ if and only if $\hat{m}([0,y_1])=+\infty$;
\item[(ii)]
$\tau_{y_1}(Y)\stackrel{a.s.}{=}+\infty$ if and only if
$\int_{[0,y_1)}(y_1-y)\,\hat{m}(\dd y)=+\infty$.
\end{itemize}
\end{lemma}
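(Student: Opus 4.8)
The plan is to reduce both statements to the classical boundary classification of the one--dimensional diffusion $Y$ at its finite right endpoint $y_1$. Because the scale function has been chosen to be the identity, this classification depends only on how much mass the speed measure $m$ puts near $y_1$, and the remaining work is the elementary translation of the resulting conditions on $m$ into conditions on the string $\hat{m}$. Two facts make that translation painless: $\hat{m}$ is locally finite on $[0,y_1)$, so only mass accumulating at $y_1$ can cause an infinity; and the atoms of $m$ at $0$ and $y_1$ and the factor $\frac{1}{2}$ relating $m$ and $\hat{m}$ on $(0,y_1)$ are irrelevant to any finiteness question. In particular $m([0,y_1])<+\infty\Leftrightarrow\hat{m}([0,y_1])<+\infty$, and $\int_{[0,y_1)}(y_1-y)\,m(\dd y)<+\infty\Leftrightarrow\int_{[0,y_1)}(y_1-y)\,\hat{m}(\dd y)<+\infty$.

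For part (i), one implication is already in the text before the lemma: property (S2) on page \pageref{speedDefi} states that $y_1$ non--absorbing forces $m([0,y_1])<+\infty$, hence $\hat{m}([0,y_1])<+\infty$. For the converse I would use the construction of $Y$ from scale and speed, \cite[Theorem (6.5)]{AS1998}, which, applied as in the proof of Lemma~A.1, produces a non--absorbing (reflecting--type) left--singular boundary at $y_1$ precisely when $m([0,y_1])<+\infty$, equivalently when $\hat{m}([0,y_1])<+\infty$; hence $y_1$ absorbing implies $\hat{m}([0,y_1])=+\infty$. Note that here the dichotomy is clean --- there is no ``entrance--type'' alternative to worry about --- because $y_1<\infty$: from $y_1-y\le y_1$ on $(0,y_1)$, $m([c,y_1])<+\infty$ already forces the Feller integral $\int_{(c,y_1)}(y_1-y)\,m(\dd y)$ to be finite, so a non--absorbing $y_1$ is automatically accessible.

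For part (ii) I would pass through the Laplace transform of $\tau_{y_1}(Y)$, which also fits the string--theoretic spirit of the paper. For $Y$ started at the reflecting endpoint $0$, the classical theory gives $\ee_x[\exp(-\lambda\,\tau_{y_1}(Y))]=1/A_\lambda(y_1)$, where $A_\lambda$ is the nondecreasing fundamental solution of the Stieltjes equation $A_\lambda''=\lambda\,A_\lambda\,\hat{m}$ normalised by $A_\lambda(0)=1$ and $A_\lambda'(0+)=\lambda\,\hat{m}(\{0\})$. Integrating this equation twice and using $A_\lambda\ge 1$ yields
\[
A_\lambda(y_1)\,\ge\,1+\lambda\int_{[0,y_1)}(y_1-y)\,\hat{m}(\dd y),\qquad\lambda>0.
\]
If $\int_{[0,y_1)}(y_1-y)\,\hat{m}(\dd y)=+\infty$, then $A_\lambda(y_1)=+\infty$ for every $\lambda>0$, so $\ee_x[\exp(-\lambda\,\tau_{y_1}(Y))]=0$ and $\tau_{y_1}(Y)\stackrel{a.s.}{=}+\infty$. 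If $\int_{[0,y_1)}(y_1-y)\,\hat{m}(\dd y)<+\infty$, a Gronwall argument gives $A_\lambda(y_1)<+\infty$ for all $\lambda>0$, and dominated convergence (using that $A_\lambda$ increases in $\lambda$, so $A_\lambda\le A_{\lambda_0}$ on $[0,y_1]$ for $\lambda\le\lambda_0$) gives $A_\lambda(y_1)\downarrow 1$ as $\lambda\downarrow 0$, whence $\tau_{y_1}(Y)<+\infty$ almost surely. A self--contained alternative for the first of these two directions is a Green's--function computation showing $\ee_x[\tau_{y_1}(Y)]$ to be a positive multiple of $\int_{[0,y_1)}(y_1-y)\,m(\dd y)$; but then recovering the ``$=+\infty$'' direction still needs the zero--one law $\pp_x(\{\tau_{y_1}(Y)<+\infty\})\in\{0,1\}$ (\cite[Lemma (2.9)]{AS1998}) together with the accessibility part of the boundary classification.

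The step I expect to be the main obstacle is fixing, with the right normalisation for a reflecting and --- since $\hat{m}(\{0\})$ may be positive --- possibly sticky boundary at $0$, the identity relating $\ee_x[\exp(-\lambda\,\tau_{y_1}(Y))]$ to the fundamental solution $A_\lambda$, and checking that the diffusion built in Lemma~A.1 is the one that identity applies to. If one prefers the Green's--function route, the obstacle moves to the classical sore point of boundary classification: upgrading ``$\ee_x[\tau_{y_1}(Y)]=+\infty$'' to ``$\tau_{y_1}(Y)=+\infty$ almost surely'', which infinite expectation alone does not deliver. Everything else --- the two integral estimates and the passage between $m$ and $\hat{m}$ --- is routine.
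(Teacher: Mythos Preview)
Your argument for (i) is essentially the paper's: one direction is (S2), and for the converse the paper also appeals to the construction of $Y$ from $\hat m$. The only cosmetic difference is that the paper phrases the converse via the time-change functional $A_t=\int\frac12 L^y_t(W)\,\hat m(\dd y)$: if $\hat m([0,y_1])<+\infty$ then $A_t<+\infty$ for all $t$, so the inverse time change never explodes and $Y$ cannot be absorbed.

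For (ii) your route is genuinely different. The paper stays with the time-change construction and reduces the question to a statement about Brownian local time: $\tau_{y_1}(Y)\stackrel{a.s.}{=}+\infty$ iff $\lim_{t\uparrow\tau_{y_1}(W)}A_t\stackrel{a.s.}{=}+\infty$, and then cites \cite[Proposition (A1.8)]{AS1998} for the equivalence of the latter with $\int_{[0,y_1)}(y_1-y)\,m(\dd y)=+\infty$. This is shorter precisely because it offloads the hard step to a ready-made result about additive functionals of $W$. Your Laplace-transform/fundamental-solution approach is correct and fits nicely with the string-theoretic machinery already in Section~4 --- your $A_\lambda$ is exactly the paper's $f_N(\lambda,\cdot)$, including the sticky normalisation $A_\lambda'(0+)=\lambda\,\hat m(\{0\})$. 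What you gain is a more self-contained, classical argument that does not lean on the specific Brownian time-change; what you pay is exactly the obstacle you flag, namely pinning down the identity $\ee_x[e^{-\lambda\tau_{y_1}(Y)}]=1/A_\lambda(y_1)$ for this particular realisation of $Y$ with a possibly sticky reflecting boundary at $0$. The paper sidesteps that verification entirely.
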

\begin{theorem}\label{trace_gen_theorem}
%In the sense of Remark \ref{implicit conditions} and Remark \ref{afterBCxreg},
Choose a non-trivial Krein string $\hat{m}$ satisfying (S4),
%which is strictly increasing on $(0,y_1)$,
and select a PDE operator ${\cal L}_x$ 
according to (\ref{formal generator}) and Remark \ref{implicit conditions}.
Let $u$ be a bounded solution to (\ref{PDE}) satisfying Assumption \ref{BCxreg},
and suppose that the extension $u_f$ satisfies
$\partial^+_y u_f(\cdot, 0) 
+
m_0\,\mathcal{L}_x u_f(\cdot, 0)\in C_b(\mathbb{R}^d)$.
Consider one of the following cases:
\begin{enumerate}
\item[(a)] $y_1 = \infty$;
\item[(b1)] $y_1 < \infty$, 
$\hat{m}([0,y_1])=+\infty$ but $\int_{[0,y_1)}(y_1-y)\,\hat{m}(\dd y)<+\infty$,
$u_f(\cdot, y_1- ) \equiv 0$,
and the extension $u_f:\bfR^d\times[0,y_1]\to\bfR$
is jointly continuous at any $(x,y_1),\,x\in\bfR^d$;
\item[(b2)]
$y_1 < \infty$, $\int_{[0,y_1)}(y_1-y)\,\hat{m}(\dd y)=+\infty$,
and\, $\sup_{x\in\bfR^d}|u(x,y_1-h)|\to 0,\,h\downarrow 0$;
\item[(c)] $y_1 < \infty$, $\hat{m}([0,y_1])<+\infty$, 
and $u_f$ satisfies 
$\partial_y^- u_f(\cdot,y_1-) \equiv m_1\,{\cal L}_x u_f(\cdot,y_1-)\in C(\bfR^d)$.
\end{enumerate}
Then, for any $x\in\bfR^d$,
\begin{align*}
\lim_{t \downarrow 0}\,\frac{1}{t}\,\ee_x[\,f(Z(t)) - f(x)\,]
\,=\, 
\partial_y^+u_f(x, 0) + m_0\,\mathcal{L}_x u_f(x, 0).
\end{align*}	
\end{theorem}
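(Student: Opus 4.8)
The plan is to read off the limit from It\^o's formula (Lemma \ref{ito}) by evaluating it at the random time $T_t$ and substituting $r=L^0_s(Y)$, after which only a Ces\`aro argument remains. Set $h:=\partial_y^+u_f(\cdot,0)+m_0\,{\cal L}_x u_f(\cdot,0)$, which by hypothesis lies in $C_b(\bfR^d)$, and extend it by $h(\dagger)=0$. First I would apply the relevant part of Lemma \ref{ito}: part (a) in case (a); part (b) in cases (b1) and (b2), using that $\int_{[0,y_1)}(y_1-y)\,\hat{m}(\dd y)=+\infty$ forces $\hat{m}([0,y_1])=+\infty$, hence $Y$ absorbing at $y_1$ by Lemma \ref{analytic for Y}(i); and part (c) in case (c), where the hypothesis $\partial_y^-u_f(\cdot,y_1-)\in C(\bfR^d)$ is exactly what makes Lemma \ref{ito}(c) applicable. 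Then I would simplify the right-hand side using two features of the construction of $Y$: the occupation identities ${\bf 1}_{\{0\}}(Y(s))\,\dd s=m_0\,\dd L^0_s(Y)$ and ${\bf 1}_{\{y_1\}}(Y(s))\,\dd s=m_1\,\dd L^{y_1}_s(Y)$, and the fact that $\dd L^0_s(Y)$ is carried by $\{s:Y(s)=0\}$, where $X(s)=Z(L^0_s(Y))$ for $\dd L^0$-almost every $s$. The first identity merges $\int_0^{t'}\partial_y^+u_f(X(s),0)\,\dd L^0_s(Y)$ and $\int_0^{t'}{\cal L}_x u_f(X(s),0)\,{\bf 1}_{\{0\}}(Y(s))\,\dd s$ into $\int_0^{t'}h(X(s))\,\dd L^0_s(Y)$, and the second, with the change of variables $r=L^0_s(Y)$, turns this into $\int_0^{L^0_{t'}(Y)}h(Z(r))\,\dd r$.

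Next I would dispose of the boundary terms located at $y_1$. In case (c), the hypothesis $\partial_y^-u_f(\cdot,y_1-)\equiv m_1\,{\cal L}_x u_f(\cdot,y_1-)$ together with ${\bf 1}_{\{y_1\}}(Y(s))\,\dd s=m_1\,\dd L^{y_1}_s(Y)$ makes the two $y_1$-drift-and-local-time terms of Lemma \ref{ito}(c) cancel exactly, leaving only a stochastic integral. In case (b1), $u_f(\cdot,y_1-)\equiv 0$ forces, by differentiating under the boundary limits (Assumption \ref{BCxreg} and dominated convergence, as used in Remark \ref{goody}), that $\partial_i u_f(\cdot,y_1-)\equiv\partial_{ij}u_f(\cdot,y_1-)\equiv{\cal L}_x u_f(\cdot,y_1-)\equiv 0$, so all $y_1$-terms of Lemma \ref{ito}(b) vanish, while joint continuity at $(x,y_1)$ ensures $t'\mapsto u_f(X(t'),Y(t')){\bf 1}_{\{t'<\tau_{y_1}(Y)\}}$ is continuous. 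In case (b2), part (b) is again used, but $\tau_{y_1}(Y)=+\infty$ a.s.\ by Lemma \ref{analytic for Y}(ii), so all indicators ${\bf 1}_{\{s\ge\tau_{y_1}(Y)\}}$ are zero and no $y_1$-term survives. In every case this leaves an identity of the form
\[
u_f(X(t'),Y(t'))\,{\bf 1}_{\{t'<\tau_{y_1}(Y)\}}-f(x)\,=\,\int_0^{L^0_{t'}(Y)}h(Z(r))\,\dd r\,+\,N_{t'},\qquad t'\ge 0,
\]
with $N$ a continuous local martingale and the indicator understood as $1$ in cases (a), (b2), (c).

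I would then evaluate at $t'=T_t$. On $\{T_t<\infty\}$ one has $L^0_{T_t}(Y)=t$, $Y(T_t)=0$, and (in case (b1)) $T_t<\tau_{y_1}(Y)$, so the left-hand side is $f(Z(t))-f(x)$ and $f(Z(t))-f(x)=\int_0^t h(Z(r))\,\dd r+N_{T_t}$. On $\{T_t=+\infty\}$, which has positive probability only in cases (b1) and (b2) (where $0$ is transient for $Y$), one has $Z(t)=\dagger$, so the left-hand side is $-f(x)$; letting $t'\to\infty$ in the displayed identity and using the remaining boundary hypotheses — $u_f(\cdot,y_1-)\equiv 0$ with joint continuity in (b1), and $\sup_{x}|u(x,y_1-h)|\to 0$ as $h\downarrow 0$ in (b2) — forces $u_f(X(t'),Y(t')){\bf 1}_{\{t'<\tau_{y_1}(Y)\}}\to 0$, so $N_{t'}$ converges to some $N_\infty$ and the same identity holds with $N_{T_t}:=N_\infty$ and with the convention $Z(r)=\dagger$, $h(\dagger)=0$ for $r\ge L^0_\infty(Y)$. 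Thus in all cases $f(Z(t))-f(x)=\int_0^t h(Z(r))\,\dd r+N_{T_t}$, so that $N_{T_r}=f(Z(r))-f(x)-\int_0^r h(Z(u))\,\dd u$, which is bounded on $r\in[0,t]$ by $2\|u\|_\infty+t\|h\|_\infty$ \emph{because $u$ is bounded}; since moreover $r\mapsto N_{T_r}$ is a local martingale for $({\cal F}_{T_r})$ — a time change of the continuous local martingale $N$ by the inverse of the continuous additive functional $L^0(Y)$, with $N_\infty$ existing a.s. — it is a true martingale, and $\ee_x[N_{T_t}]=0$. Hence $\ee_x[f(Z(t))-f(x)]=\int_0^t\ee_x[h(Z(r))]\,\dd r$, and since $Z$ is right-continuous with $Z(0)=x$ and $L^0_\infty(Y)>0$ a.s., bounded convergence gives $\ee_x[h(Z(r))]\to h(x)$ as $r\downarrow 0$, whence $\tfrac1t\int_0^t\ee_x[h(Z(r))]\,\dd r\to h(x)=\partial_y^+u_f(x,0)+m_0\,{\cal L}_x u_f(x,0)$, as claimed.

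\textbf{Main obstacle.} The genuinely delicate point is the behaviour on $\{T_t=+\infty\}$: the right-inverse $T$ jumps to $+\infty$ at the terminal local-time level $L^0_\infty(Y)$, so one has to pass to the limit $t'\to\infty$ in Lemma \ref{ito} and show that the left-hand value tends to $-f(x)$ — which is precisely where the otherwise unmotivated boundary hypotheses of cases (b1) and (b2) are consumed — while simultaneously keeping $N$ controlled so that the argument showing $r\mapsto N_{T_r}$ is a true martingale still closes. Everything else (the occupation identities, the change of variables under the time change, the cancellation at $y_1$ in case (c), and the final Ces\`aro step) is routine once the low-regularity It\^o formula of Lemma \ref{ito} is granted.
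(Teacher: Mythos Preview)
Your overall strategy---apply Lemma \ref{ito}, collapse the zero-level terms via the occupation identity ${\bf 1}_{\{0\}}(Y(s))\,\dd s=m_0\,\dd L^0_s(Y)$, time-change by $T_t$, and finish with a Ces\`aro argument---matches the paper's, and cases (a), (b2) and (c) go through essentially as you describe. The paper differs from you only in that it localises explicitly (with stopping times $t_N\le T_t\wedge\tau_N\wedge S_N'$) before taking expectations, and then passes to the limit $N\to\infty$ by dominated convergence, rather than arguing directly that the time-changed process $r\mapsto N_{T_r}$ is a bounded true martingale; your route is a bit slicker when it works, but the localisation is what makes case (b1) go through.

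There is a genuine gap in case (b1). You assert that $u_f(\cdot,y_1-)\equiv 0$ forces $\partial_i u_f(\cdot,y_1-)\equiv\partial_{ij}u_f(\cdot,y_1-)\equiv{\cal L}_xu_f(\cdot,y_1-)\equiv 0$, but this is not justified. By Remark \ref{afterBCxreg}, the symbol ${\cal L}_xu_f(x,y_1-)$ stands for $\sum_i a_i(x)\lim_{y\uparrow y_1}\partial_i u(x,y)+\tfrac12\sum_{ij}(\sigma\sigma^{\bf T})_{ij}(x)\lim_{y\uparrow y_1}\partial_{ij}u(x,y)$, and Assumption \ref{BCxreg} only says these pointwise limits exist and are locally bounded---it does \emph{not} say they agree with the $x$-derivatives of the limit function $u_f(\cdot,y_1-)$. (Remark \ref{goody}, which you invoke, treats a different question and is stated under $\hat{m}([0,y_1])<+\infty$, i.e.\ case (c).) At best a mean-value argument gives $\partial_i u_f(\cdot,y_1-)=0$ Lebesgue-a.e., which does not kill the drift $\int_{\tau_{y_1}(Y)}^{t'}{\cal L}_xu_f(X(s),y_1-)\,\dd s$ without extra hypotheses on the law of $X$. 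Hence your displayed identity may fail for $t'\ge\tau_{y_1}(Y)$, and sending $t'\to\infty$ does not yield what you claim.

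The paper sidesteps this completely: in case (b) it works with $t_N<\tau_{y_1}(Y)$ throughout (via an auxiliary cutoff at level $y_1-h$), so the $y_1$-terms of Lemma \ref{ito}(b) never appear, and the hypotheses $u_f(\cdot,y_1-)\equiv 0$ and joint continuity are used \emph{only} to show $u_f(X(\tau_{y_1-h}(Y)),y_1-h)\to 0$ as $h\downarrow 0$. You can repair your argument in the same spirit: on $\{T_t=\infty\}$, send $t'\uparrow\tau_{y_1}(Y)$ rather than $t'\to\infty$; then $u_f(X(t'),Y(t'))\to 0$ by joint continuity, $L^0_{t'}(Y)\to L^0_\infty(Y)$, and the problematic $y_1$-drift is never seen.
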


The question arises whether the representation of 
$\lim_{t\downarrow 0}\ee_x[f(Z(t)) - f(x)]/t$
given in the above theorem 
is related to the generator of a semigroup associated with $Z$.

To start with, under the made assumptions,
both processes $X$ and $Y$ are strong Markov processes,
the process $X$ because of \cite[Theorem 5.4.20]{KS1991},
and $Y$ by construction in \cite[Chapter VI]{AS1998}.

Of course, the transition semigroup associated with $X$ 
is a semigroup of contractions acting on bounded measurable functions.
Since the coefficients of the SDE (\ref{SDE}) are supposed to be continuous,
it is easy to see that the transition semigroup  preserves 
the Banach space $C_b(\bfR^d)$ of bounded continuous functions.
However, as we allow unbounded coefficients, 
its restriction to $C_b(\bfR^d)$ might not be strongly continuous with respect to the uniform norm. 
%Moreover, $C_b(\bfR^d)$ is not separable.

So, for the next corollary, 
let us assume that there exists a strongly continuous semigroup of contractions
on a Banach space $E\subseteq C(\bfR^d)$,
which coincides with the transition semigroup on $C_b(\bfR^d)\cap E$.
Linking $E$ and our PDE-problem,
we further assume that $E$ densely includes 
all twice continuously differentiable functions with compact support.

Then, this semigroup is generated by a closed operator
on $E$ with dense domain. We naturally denote this generator by ${\cal L}_x$ 
as it obviously coincides with ${\cal L}_x$ given by (\ref{formal generator})
on the subspace of all twice continuously differentiable functions
with compact support. We also say that the process $X$ is associated with this semigroup
(or with the generator of this semigroup), and we denote the semigroup by
$e^{t{\cal L}_x},\,t\ge 0$, in what follows, slightly abusing notation
because this notation is usually reserved for semigroups generated
by self-adjoint operators on Hilbert spaces.

Next, by the strong Markov property of $Y$,
the process $[T_t, t \geq 0]$ is a (possibly killed) subordinator,
see Section 68 of Chapter VIII in \cite{S1988}, for example.
Let $\psi(\lambda),\,\lambda>0$, denote the Laplace exponent
of this subordinator, that is
\begin{equation}\label{exponent}
\ee_x[e^{-\lambda T_t}]
\,=\,
\ee_x[e^{-\lambda T_t}{\bf 1}_{\{T_t<\infty\}}]
\,=\,
\int_{[0,\infty)}e^{-\tau\lambda}\,\gamma_t(\dd\tau)
\,=\,
e^{-t\psi(\lambda)},
\end{equation}
where $\gamma_t$ stands for the law of the random time $T_t$.
Then Phillips' theorem, see \cite{P1952}, 
yields that the family of Stieltjes-type Banach space valued integrals
\begin{equation}\label{stieltjes}
\int_{[0,\infty)}e^{\tau{\cal L}_x}\,\gamma_t(\dd\tau),
\quad t\ge 0,
\end{equation}
defines a strongly continuous contraction semigroup on $E$,
whose generator has a domain including the domain of ${\cal L}_x$.
This semigroup obviously coincides with the transition semigroup of $Z$
on $C_b(\bfR^d)\cap E$ because, for $f\in C_b(\bfR^d)\cap E$,
\[
\ee_x[f(Z(t))]
\,=\,
\ee_x[f(X(T_t))]
\,=\,
\int_{[0,\infty)}[e^{\tau{\cal L}_x}f](x)\,\gamma_t(\dd\tau)
\,=\,
[\int_{[0,\infty)}e^{\tau{\cal L}_x}\,\gamma_t(\dd\tau)f\,](x),
%\,=\,
%[e^{-t\psi(-{\cal L}_x)}f](x)
\]
and hence $Z$ is a strong Markov process which is associated with
the generator of the strongly continuous contraction semigroup
$\int_{[0,\infty)}e^{\tau{\cal L}_x}\,\gamma_t(\dd\tau),\,t\ge 0$,
on $E$.

Even more, formally substituting $e^{-\tau\lambda}$ by $e^{-\tau(-{\cal L}_x)}$ in (\ref{exponent}),
gives
\[
\int_{[0,\infty)}e^{\tau{\cal L}_x}\,\gamma_t(\dd\tau)
\,=\,
e^{-t\psi(-{\cal L}_x)},
\quad t\ge 0,
\]
suggesting that, formally, the generator of the semigroup 
$\int_{[0,\infty)}e^{\tau{\cal L}_x}\,\gamma_t(\dd\tau),\,t\ge 0$,
that is a generator associated with $Z$, 
could be identified with $-\psi(-{\cal L}_x)$, in some sense.
This functional calculus based on subordination has indeed been established,
see \cite{S1998} for a good account on this theory.
Furthermore, if ${\cal L}_x$ was a self-adjoint operator on a Hilbert space,
then the functional calculus for self-adjoint operators on a Hilbert space
would produce the same operator $-\psi(-{\cal L}_x)$,
assuming that $\psi$ is an admissible function in this calculus, too.

All in all, the well-established operator $-\psi(-{\cal L}_x)$ on $E$,
where $\psi$ is the Laplace exponent of the subordinator $[T_t,\,t\ge 0]$,
plays the role of a generator associated with the trace process $Z$,
and the domain of ${\cal L}_x$ is even a core for $-\psi(-{\cal L}_x)$.
Therefore,
\[
\lim_{t\downarrow 0}\,%\sup_{x\in\bfR^d}\,
\Big\|\,
\frac{1}{t}\,\ee_{\,\boldsymbol{\cdot}}[\,f(Z(t))-f(\boldsymbol{\cdot})\,]+\psi(-{\cal L}_x)f
\,\Big\|_{E}
%\,/\,(1+|x|^2)
\,=\,
0,
\]
for any bounded $f$ in the domain of ${\cal L}_x$,
and since uniform convergence implies pointwise convergence,
we have got the following answer to our question below Theorem \ref{trace_gen_theorem}.
\begin{corollary}\label{ourD2N}
Under the conditions of Theorem \ref{trace_gen_theorem},
if $X$ is associated with a strongly continuous semigroup of contractions, $e^{t{\cal L}_x},\,t\ge 0$,
on a Banach space $E\subseteq C(\bfR^d)$,
and if $u_f(\cdot,0)$ is in the domain of the generator of this semigroup,
then
\[
-\psi(-{\cal L}_x)f(x)
\,=\,
\partial_y^+u_f(x, 0) + m_0\,\mathcal{L}_x u_f(x, 0),
\]
for any $x\in\bfR^d$.
\end{corollary}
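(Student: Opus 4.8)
The plan is to derive Corollary \ref{ourD2N} as an essentially immediate consequence of Theorem \ref{trace_gen_theorem} together with the operator-theoretic machinery laid out in the paragraphs preceding it. The only real task is to match the pointwise limit supplied by the theorem against the strong (norm) limit coming from the Phillips/subordination calculus, and to read off that they must agree. So first I would fix the data: let $f = u_f(\cdot,0)$, which by Remark \ref{afterBCxreg} is a bounded measurable function on $\bfR^d$; by hypothesis $f$ lies in the domain of the generator ${\cal L}_x$ of the strongly continuous contraction semigroup $e^{t{\cal L}_x}$ on $E$, so in particular $f \in E \subseteq C(\bfR^d)$. Since ${\cal L}_x f \in E \subseteq C(\bfR^d)$ and $f$ is bounded, the additional regularity hypothesis of Theorem \ref{trace_gen_theorem}, namely $\partial_y^+ u_f(\cdot,0) + m_0\,{\cal L}_x u_f(\cdot,0) \in C_b(\bfR^d)$, is consistent; and in each of the cases (a), (b1), (b2), (c) the extra boundary conditions on $u_f$ are assumed to hold, so Theorem \ref{trace_gen_theorem} applies verbatim.

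Next I would invoke the subordination argument spelled out in the text. By the strong Markov property of $Y$, $[T_t,\,t\ge 0]$ is a (possibly killed) subordinator with Laplace exponent $\psi$ as in (\ref{exponent}); Phillips' theorem then gives that (\ref{stieltjes}) defines a strongly continuous contraction semigroup on $E$ whose generator $-\psi(-{\cal L}_x)$ has the domain of ${\cal L}_x$ as a core, and this semigroup coincides with the transition semigroup of $Z$ on $C_b(\bfR^d)\cap E$. Because $f = u_f(\cdot,0)$ is bounded and lies in the domain of ${\cal L}_x$, it lies in the domain of $-\psi(-{\cal L}_x)$ as well, and the displayed strong-limit identity just above the corollary gives
\[
\lim_{t\downarrow 0}\Big\|\tfrac{1}{t}\,\ee_{\,\boldsymbol{\cdot}}[f(Z(t)) - f(\boldsymbol{\cdot})] + \psi(-{\cal L}_x) f\Big\|_E = 0.
\]
Since $E \subseteq C(\bfR^d)$ with the norm of $E$ dominating (or at least controlling convergence at each point — here one uses that $E$-convergence implies pointwise convergence, as stated in the text), norm convergence forces pointwise convergence: for every $x \in \bfR^d$,
\[
\lim_{t\downarrow 0}\,\tfrac{1}{t}\,\ee_x[f(Z(t)) - f(x)] = -\psi(-{\cal L}_x) f(x).
\]

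Finally I would combine the two limits. Theorem \ref{trace_gen_theorem} identifies the left-hand side of the last display with $\partial_y^+ u_f(x,0) + m_0\,{\cal L}_x u_f(x,0)$ for every $x$. Equating the two expressions for $\lim_{t\downarrow 0}\ee_x[f(Z(t))-f(x)]/t$ yields
\[
-\psi(-{\cal L}_x) f(x) = \partial_y^+ u_f(x,0) + m_0\,{\cal L}_x u_f(x,0), \qquad x \in \bfR^d,
\]
which is exactly the assertion of Corollary \ref{ourD2N} since $f = u_f(\cdot,0)$. The main obstacle, such as it is, is purely bookkeeping: one must check that the hypothesis ``$u_f(\cdot,0)$ in the domain of the generator'' genuinely places $f$ in the domain of $-\psi(-{\cal L}_x)$ (so that the strong limit above is available) and that it is compatible with ``$f$ bounded'' so that both Theorem \ref{trace_gen_theorem} and the Phillips-theorem limit can be applied to the same $f$; once that alignment is made, nothing further is needed. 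There is no delicate analysis here — the corollary is a transcription of the theorem into semigroup language, using that $Z$ is associated with the generator $-\psi(-{\cal L}_x)$.
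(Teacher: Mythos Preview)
Your proposal is correct and follows essentially the same route as the paper: the corollary is not given a separate proof in the paper but is derived in the text immediately preceding its statement by combining the Phillips/subordination strong-limit identity (which yields pointwise convergence since $E$-convergence implies pointwise convergence) with the pointwise limit from Theorem \ref{trace_gen_theorem}. Your write-up matches this argument step for step.
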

\begin{remark}\rm \label{DtoNtrue}
(a) Theorem \ref{trace_gen_theorem} only asserts pointwise convergence
of $\ee_x[\,f(Z(t))-f(x)\,]/t$
which is much weaker than convergence
in a Banach space (note that pointwise convergence is not even metrizable).
Thus, if the conditions of Theorem \ref{trace_gen_theorem}
are satisfied, then $u_f(\cdot,0)$ does \underline{not} necessarily
have to be in the domain of a generator associated with the trace process.
However, if $E$ can be chosen to be $C_0(\bfR^d)$,
that is the space of continuous functions vanishing at infinity,
and if $-\psi(-{\cal L}_x)$ satisfies the positive maximum principle,
then pointwise convergence would even imply uniform convergence, once both 
$u_f(\cdot,0)$ and $\partial_y^+u_f(\cdot, 0) + m_0\,\mathcal{L}_x u_f(\cdot, 0)$
are in $C_0(\bfR^d)$.

(b) By standard arguments,
the Banach space $E$ can be chosen to be $C_0(\bfR^d)$
if the coefficients of ${\cal L}_x$ are bounded.
But, in the general unbounded case, the semigroup $e^{t{\cal L}_x},\,t\ge 0$, 
might not preserve this space and a growth condition at infinity would be needed.
For example, if there exists a twice continuously differentiable  Lyapunov-type function 
$f_L:\bfR^d\to(0,\infty)$ such that
$\lim_{x\to\infty}f_L(x)=+\infty$ and ${\cal L}_x f_L\le 0$,
then the choice
%\begin{equation}\label{ourSpace}
$
E
\,=\,
\{f\in C(\bfR^d):\mbox{$\lim_{x\to\infty}$}f(x)/f_L(x)=0\}
$
%\end{equation}
would work. Here, ${\cal L}_x f_L\le 0$ is strong enough to ensure that $e^{t{\cal L}_x},\,t\ge 0$, 
is a strongly continuous semigroup of contractions on $E$,
while the weaker condition ${\cal L}_x f_L\le const(f_L+1)$ would only
ensure strong continuity. 
Note that linear growth, (\ref{linearGrowth}), of the coefficients of ${\cal L}_x$ at infinity is only sufficient
for the weaker condition, and that without contraction property the integrals
in (\ref{stieltjes}) might not converge. 
Thus, even in the simple case of unbounded coefficients with linear growth,
it might only be possible to justify the convergence of
$\ee_{\,\boldsymbol{\cdot}}[\,f(Z(t))-f(\boldsymbol{\cdot})\,]/t$
with respect to a topology weaker than the canonical topology of the underlying
Banach space, so it might be hard to associate $Z$ with a `proper' generator.

(c) Some generators ${\cal L}_x$, for example $\Delta_x$,
can also be considered in $L^p(\bfR^d)$-spaces, $1\le p<\infty$,
determined by a reference measure, which in case of $\Delta_x$ 
would simply be the Lebesgue measure on $\bfR^d$. 
Then, under the conditions of Theorem \ref{trace_gen_theorem},
if $u_f(\cdot,0)$ is in the $L^p(\bfR^d)$-domain of the generator ${\cal L}_x$,
the assertion of Corollary \ref{ourD2N}
would only hold true for almost every $x\in\bfR^d$
with respect to the reference measure, by obvious reasons.
Of course, this assertion would nevertheless imply the identity
\[
-\psi(-{\cal L}_x)f
\,=\,
\partial_y^+u_f(\cdot, 0) + m_0\,\mathcal{L}_x u_f(\cdot, 0),
\]
in the corresponding $L^p$-space, though only the sum on the right-hand side,
not each summand, has to be in that $L^p$-space.

(d) Subject to $m_0=0$,
the conditions of this corollary translate into conditions on
${\cal L}_x,\,\hat{m}$, and the Dirichlet data, such that 
the operator $-\psi(-{\cal L}_x)$ acts as Dirichlet-to-Neumann map, too. 
For how these conditions compare to existing conditions, 
and for how the Laplace exponent $\psi$ depends on $\hat{m}$, 
the reader is referred to Section 4.
\end{remark}

So far, we have considered $(X,Y)$ a stochastic process on some
probability space $(\Omega,{\cal F},\pp_{\!\!x})$,
taking values in 
$\bfR^d\times\left(\rule{0pt}{12pt}\right.[0,y_1]\cap\bfR\left.\rule{0pt}{12pt}\right)$,
with $X$ and $Y$ almost surely starting at $x$ and $0$, respectively. 
The starting point $Y(0)\stackrel{a.s.}{=}0$ 
was crucial for showing that an operator associated with the trace process 
maps the Dirichlet boundary condition of a solution of (\ref{PDE}) 
to its Neumann boundary condition.

But, %following the ideas given in \cite[Section 5.7]{KS1991},
we can also apply our It\^o's lemma \ref{ito} to obtain stochastic representations
for fundamental solutions of our PDE (\ref{PDE})
with respect to different boundary conditions.
For this purpose, we need the processes $X$ and $Y$ 
starting at $x\in\bfR^d$ and arbitrary $y\in(0,y_1)$, respectively,
and we are going to write $X^x$ and $Y^y$ instead of $X$ and $Y$
in the remaining part of this section,
in order to emphasise the specific choice of starting values.
Also, slightly abusing notation, we will denote the underlying probability measure
by $\pp$ dropping the sub-index $x$, though $\pp$ could in principle
be different for different $(x,y)\in\bfR^d\times(0,y_1)$.

Now, for arbitrary but fixed $(x,y)\in\bfR^d\times(0,y_1)$, 
let $\pi(x,y,\dd x')$ denote the probability law determined by
\[
\int_{\bfR^d}\pi(x,y,\dd x')f(x')\,=
\left\{\begin{array}{ccc}
\ee[\,f(X^x(\tau_0(Y^y))){\bf 1}_{\{\tau_0(Y^y)<\tau_{y_1}(Y^y)\}}\,]
&:&\mbox{$y_1<\infty$ absorbing},\\
\rule{0pt}{12pt}
\ee[\,f(X^x(\tau_0(Y^y)))\,]&:&\mbox{else},
\end{array}\right.
\]
where $f:\bfR^d\to\bfR$ can be any bounded measurable function.
\begin{corollary}\label{fundamental solution}
For any bounded solution $u$ of (\ref{PDE}) satisfying Assumption \ref{BCxreg},
if the extension 
$u_f:
\bfR^d\times\left(\rule{0pt}{12pt}\right.[0,y_1]\cap\bfR\left.\rule{0pt}{12pt}\right)
\to\bfR$
is jointly continuous at any $(x,0),\,x\in\bfR^d$,
then, in each of the four cases considered in Theorem \ref{trace_gen_theorem},
\[
u(x,y)\,=\int_{\bfR^d}\pi(x,y,\dd x')f(x'),
\quad(x,y)\in\bfR^d\times(0,y_1),
\]
and hence $\pi(x,y,\dd x')$ acts as a fundamental solution
in each of theses cases.
\end{corollary}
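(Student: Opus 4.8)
The plan is to apply It\^o's Lemma \ref{ito} to $u_f(X^x(\cdot),Y^y(\cdot))$, in the version in which the diffusion $Y^y$ starts at the given interior point $y\in(0,y_1)$ rather than at $0$ --- this variant holds by exactly the same argument, with the constant $f(x)=u_f(x,0)$ on the left-hand side of the expansion replaced by $u_f(x,y)=u(x,y)$ --- and then to stop the expansion at $\tau_0(Y^y)$ and, in the cases where $Y$ is absorbing at $y_1$, also at $\tau_{y_1}(Y^y)$ or at $\tau_{y_1-h}(Y^y)$. The decisive observation is that, since $Y^y$ starts in the open interval, $L^0_s(Y^y)=0$ and ${\bf 1}_{\{0\}}(Y^y(s))=0$ for every $s\le\tau_0(Y^y)$, so after the stopping every term in Lemma \ref{ito} carrying a factor $\dd L^0_s(Y)$ or ${\bf 1}_{\{0\}}(Y(s))\,\dd s$ drops out. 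Apart from the $y_1$-boundary contributions treated next, what survives on the right-hand side is a sum of stochastic integrals against $B_1,\dots,B_p,B$; as $u$ is bounded, $u_f$ is bounded on $\bfR^d\times([0,y_1]\cap\bfR)$, so the stopped left-hand side is a bounded continuous local martingale, hence a true martingale started at $u(x,y)$, whose expectation stays equal to $u(x,y)$.

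Next I would deal with the behaviour at $y_1$, case by case. In (a) there is nothing to do. In (b1), $Y$ is absorbing at $y_1$ and, since $\int_{[0,y_1)}(y_1-y)\,\hat m(\dd y)<+\infty$, Lemma \ref{analytic for Y}(ii) together with the zero--one law $\pp(\{\tau_{y_1}(Y)<\infty\})\in\{0,1\}$ (see \cite[Lemma (2.9)]{AS1998}) gives $\tau_{y_1}(Y^y)<\infty$ a.s.; stopping Lemma \ref{ito}(b) at $\tau_0(Y^y)\wedge\tau_{y_1}(Y^y)$, the indicators ${\bf 1}_{\{s\ge\tau_{y_1}(Y)\}}$ never become active, and the left-hand side at that stopping time equals $f(X^x(\tau_0(Y^y))){\bf 1}_{\{\tau_0(Y^y)<\tau_{y_1}(Y^y)\}}+u_f(X^x(\tau_{y_1}(Y^y)),y_1-){\bf 1}_{\{\tau_{y_1}(Y^y)<\tau_0(Y^y)\}}$, whose second summand vanishes because $u_f(\cdot,y_1-)\equiv 0$ in case (b1). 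In (b2) one notes first that $\int_{[0,y_1)}(y_1-y)\,\hat m(\dd y)=+\infty$ forces $\hat m([0,y_1))=+\infty$, so $y_1$ is necessarily absorbing by Lemma \ref{analytic for Y}(i), while $\tau_{y_1}(Y^y)=+\infty$ a.s. by Lemma \ref{analytic for Y}(ii); accordingly one stops Lemma \ref{ito}(b) instead at $\tau_0(Y^y)\wedge\tau_{y_1-h}(Y^y)$ for $h>0$ small enough that $y<y_1-h$, so that again no $y_1$-term is present. In (c), $Y$ is (possibly stickily) reflected at $y_1$; here I would reuse the cancellation already exploited in the proof of Theorem \ref{trace_gen_theorem}(c): the occupation-time identity $\int_0^t{\bf 1}_{\{y_1\}}(Y(s))\,\dd s=m_1\,L^{y_1}_t(Y)$ for the diffusion $Y$ constructed in the Appendix, combined with the hypothesis $\partial_y^- u_f(\cdot,y_1-)\equiv m_1\,{\cal L}_x u_f(\cdot,y_1-)$, makes the $\dd L^{y_1}_s(Y)$-term and the ${\bf 1}_{\{y_1\}}(Y(s))\,\dd s$-term of Lemma \ref{ito}(c) cancel exactly.

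It then remains to let the time parameter tend to infinity. In (a), $Y^y$ in natural scale is a nonnegative continuous local martingale, hence a supermartingale which stays bounded and so cannot run off to $+\infty$; thus the gambler's-ruin estimate $\pp(\tau_0(Y^y)<\tau_b(Y^y))=(b-y)/b\to 1$ as $b\uparrow\infty$, together with regularity of $Y$ in $(0,\infty)$ ruling out convergence to an interior point, gives $\tau_0(Y^y)<\infty$ a.s. In (c), $Y^y$ is a recurrent diffusion on $[0,y_1]$ --- the endpoint $y_1$ being reflecting and $0$ accessible --- so again $\tau_0(Y^y)<\infty$ a.s. In (b1) we already know $\tau_0(Y^y)\wedge\tau_{y_1}(Y^y)<\infty$ a.s., and in (b2) the same gambler's-ruin argument applied between $0$ and $y_1-h$ gives $\tau_0(Y^y)\wedge\tau_{y_1-h}(Y^y)<\infty$ a.s. Being bounded, the martingale of the first paragraph converges a.s.\ and in $L^1$, so its expectation $u(x,y)$ equals the expectation of its a.s.\ limit. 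On $\{\tau_0(Y^y)<\infty\}$, joint continuity of $u_f$ at $(X^x(\tau_0(Y^y)),0)$ --- this is where that hypothesis enters --- identifies the limit of the (stopped) left-hand side as $f(X^x(\tau_0(Y^y)))$; so in (a) and (c) we obtain $u(x,y)=\ee[\,f(X^x(\tau_0(Y^y)))\,]$, and in (b1) $u(x,y)=\ee[\,f(X^x(\tau_0(Y^y))){\bf 1}_{\{\tau_0(Y^y)<\tau_{y_1}(Y^y)\}}\,]$. In (b2), the left-hand side at $\tau_0(Y^y)\wedge\tau_{y_1-h}(Y^y)$ is $f(X^x(\tau_0(Y^y))){\bf 1}_{\{\tau_0(Y^y)<\tau_{y_1-h}(Y^y)\}}+u_f(X^x(\tau_{y_1-h}(Y^y)),y_1-h){\bf 1}_{\{\tau_{y_1-h}(Y^y)<\tau_0(Y^y)\}}$, its expectation equals $u(x,y)$, its second summand is at most $\sup_{x'}|u(x',y_1-h)|\to 0$ in absolute value, and ${\bf 1}_{\{\tau_0(Y^y)<\tau_{y_1-h}(Y^y)\}}\uparrow{\bf 1}_{\{\tau_0(Y^y)<\infty\}}$ because $\tau_{y_1-h}(Y^y)\uparrow\tau_{y_1}(Y^y)=+\infty$; letting $h\downarrow 0$ then yields $u(x,y)=\ee[\,f(X^x(\tau_0(Y^y)))\,]$ as well. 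Comparing the identities just obtained with the definition of $\pi(x,y,\dd x')$ --- noting that the ``else'' branch applies in (a) and (c), the ``absorbing'' branch in (b1) and (b2), and using in (b2) that $\tau_{y_1}(Y^y)=+\infty$ a.s.\ together with the convention $f(\dagger)=0$ --- gives $u(x,y)=\int_{\bfR^d}\pi(x,y,\dd x')f(x')$ in each of the four cases, which proves the corollary.

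The step I expect to be the main obstacle is the behaviour at $y_1$ in case (c): getting the precise occupation-time/local-time relation at the sticky endpoint $y_1$ right, with exactly the constant $m_1$, rests on the fine details of the construction of $Y$ from the Krein string in the Appendix, and it is precisely this relation that makes the stated hypothesis on $u_f$ produce a clean cancellation. A smaller technical point is that Lemma \ref{ito} is stated with $Y$ started at $0$, so one first has to record that the same expansion remains valid for $Y^y$ started at an interior point under the case-specific regularity hypotheses inherited from Theorem \ref{trace_gen_theorem}; and one should verify with care the almost-sure finiteness of the various hitting times, which is where regularity of $Y$ in $(0,y_1)$ is used.
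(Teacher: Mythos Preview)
Your argument is correct and follows the same overall strategy as the paper: adapt Lemma~\ref{ito} to $Y^y$ starting at an interior point, observe that the $\dd L^0_s(Y)$- and ${\bf 1}_{\{0\}}(Y(s))\,\dd s$-terms vanish before $\tau_0(Y^y)$, handle the $y_1$-boundary case by case (in particular the cancellation in case~(c) via the occupation-time relation at $y_1$, exactly as in the proof of Theorem~\ref{trace_gen_theorem}(c)), and then pass to the limit.

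The technical packaging differs from the paper's. The paper localises with an explicit sequence $t_N=\tau_{1/N}(Y^y)\wedge\tau_N\wedge S_N\wedge(\cdot)$, so that the stochastic integrals have zero expectation by construction, and only afterwards lets $N\to\infty$; because the approach to $0$ is through levels $1/N$, joint continuity of $u_f$ at $\bfR^d\times\{0\}$ is genuinely used to identify $\lim_N u_f(X^x(\tau_{1/N}(Y^y)),1/N)$. You instead stop directly at $\tau_0(Y^y)$ (or $\tau_0\wedge\tau_{y_1}$, resp.\ $\tau_0\wedge\tau_{y_1-h}$) and exploit that the surviving right-hand side is a sum of continuous stochastic integrals equal to a bounded process, hence a true martingale. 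This is a clean shortcut. Note, however, that in your route the joint-continuity hypothesis is not actually doing work: once Lemma~\ref{ito} (adapted to $Y^y$) holds for all $t\ge 0$ a.s., evaluating at $t=\tau_0(Y^y)$ gives $u_f(X^x(\tau_0(Y^y)),0)=f(X^x(\tau_0(Y^y)))$ directly from the definition $u_f(\cdot,0)=f$, without any joint limit. So your remark ``this is where that hypothesis enters'' is misplaced---your argument in fact dispenses with it, whereas the paper's approximation through $\tau_{1/N}$ does not.
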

\section{Proofs}
\begin{proof}[Proof of \textbf{Lemma \ref{ito}}]
The following proof of (a) demonstrates the main idea used in all three cases,
and hence we will restrict ourselves to what is different to this proof when proving (b),(c).

(a) To start with, we `mollify' $u_f$ introducing
\[
u_f^\varepsilon(x,y)
\,\stackrel{\mbox{\tiny def}}{=}
\int_0^\infty\varrho_\varepsilon(y-y')\,u_f(x,y')\,\dd y',
\quad(x,y)\in\bfR^d\times[0,\infty),
\quad\varepsilon>0, 
\]
where $\varrho_\varepsilon(y)=\varrho(y/\varepsilon)/\varepsilon,\,y\in\bfR$,
using a `right-hand' mollifier $\varrho\in C^2(\bfR)$ with compact support
in $(-1,0)$ satisfying $\varrho\ge 0$ and $\int_\bfR\varrho(y)\,\dd y=1$. 

Note that, because of Assumption \ref{BCxreg}, on the one hand, and because 
the support of $\varrho$ is bounded away from zero, on the other,
the mollified solution $u_f^\varepsilon$ is an element of $C^2(\bfR^d\times[0,\infty))$.

Recall that $u_f$ is not necessarily jointly continuous, so we do not know if
$[u_f(X(t),Y(t)),\,t\ge 0]$ is a continuous stochastic process. However,
\[
u_f(X(\omega,t),Y(\omega,t))
\,=\,
\mbox{$\lim_{\varepsilon\downarrow 0}$}\,u_f^\varepsilon(X(\omega,t),Y(\omega,t)),
\quad\forall\,(\omega,t)\in\Omega\times[0,\infty),
\]
where $[u_f^\varepsilon(X(t),Y(t)),\,t\ge 0]$ is of course a continuous stochastic process.
Therefore, the process $[u_f(X(t),Y(t)),\,t\ge 0]$ is at least predictable,
and hence $[\mathfrak{Z}(t),\,t\ge 0]$ defined by
\begin{align*}
\mathfrak{Z}(t)
&\,\stackrel{\mbox{\tiny def}}{=}\,
u_f(X(t),Y(t))-f(x)\\
&\,-
\int_0^t\partial_y^+ u_f(X(s),0)\,\dd L_s^0(Y)
\,-
\int_0^t{\cal L}_x u_f(X(s),0)\,{\bf 1}_{\{0\}}(Y(s))\,\dd s\\
&\,-
\sum_{i=1}^d\sum_{k=1}^p\int_0^t\partial_i u_f(X(s),Y(s))\,\sigma_{ik}(X(s))\,\dd B_k(s)
\,-
\sqrt{2}\int_0^t\partial_y^+ u_f(X(s),Y(s))\,b(Y(s))\,\dd B(s)
\end{align*}
is predictable, too, because the stochastic integrals which are well-defined
by (\ref{SDE}), (\ref{SDE-Y}), and Assumption \ref{BCxreg},
give continuous processes in $t$.
Thus, using \cite[Prop.\ I 2.18 b)]{JS2003}, to prove part (a) of the lemma,
it is sufficient to show that $\pp_{\!\!x}(\{\mathfrak{Z}(t\wedge\tau)=0\})=1$,
for any $t\ge 0$, and any predictable stopping time $\tau$.

Next, set $\tau_N=\tau_N(\|X\|)\wedge N$,
and choose a sequence $S_N,\,N=1,2,\dots$, of finite stopping times
which is $L^2(\pp_{\!\!x})$-localising for the local martingale
$[\int_0^t b(Y(s))\,\dd B(s),\,t\ge 0]$. 
Since $\tau_N\wedge S_N\wedge\tau_N(Y)$ grows to infinity,
when $N\to\infty$,
it is enough to prove that
$\pp_{\!\!x}(\{\mathfrak{Z}(t\wedge\tau_N\wedge S_N\wedge\tau_N(Y)\wedge\tau)=0\})=1$,
for any $t\ge 0$, any $N=1,2,\dots$, and any predictable stopping time $\tau$.
Also, by Assumption \ref{BCxreg}, the sequence
$\tau_N\wedge S_N\wedge\tau_N(Y),\,N=1,2,\dots$, would be $L^2(\pp_{\!\!x})$-localising
for all local martingales used in the definition of $[\mathfrak{Z}(t),\,t\ge 0]$.

So, fix $t\ge 0,\,N\ge 1$, and a predictable stopping time $\tau$.
For  $t_N=t\wedge\tau_N\wedge S_N\wedge\tau_N(Y)\wedge\tau$, we are going to show
that $\pp_{\!\!x}(\{\mathfrak{Z}(t_N)=0\})=1$.
 
First, for any $\varepsilon>0$,
the classical version of It\^o's lemma gives
\begin{align}\label{leftHand}
&\hspace{2cm}u_f^\varepsilon(X({t_N}),Y({t_N}))-u_f^\varepsilon(x,0)
\,-
\int_0^{t_N}\partial_y u_f^\varepsilon(X(s),0)\,\dd L_s^0(Y)\nonumber\\
-&
\sum_{i=1}^d\sum_{k=1}^p\int_0^{t_N}\partial_i u_f^\varepsilon(X(s),Y(s))\,\sigma_{ik}(X(s))\,\dd B_k(s)
\,-\,
\sqrt{2}\int_0^{t_N}\partial_y u_f^\varepsilon(X(s),Y(s))\,b(Y(s))\,\dd B(s)\nonumber\\
&\hspace{1.2cm}\,\stackrel{a.s.}{=}
\int_0^{t_N}{\cal L}_x u_f^\varepsilon(X(s),Y(s))\,\dd s
\,+
\int_0^{t_N}\partial_y^2 u_f^\varepsilon(X(s),Y(s))\,b^2(Y(s))\,\dd s.
\end{align}

We claim that, for any chosen sequence of $\varepsilon$-values converging to zero,
there is a subsequence $(\varepsilon_n)_{n=1}^\infty$ such that the above
equation's left-hand side almost surely converges to 
\begin{align*}
&\hspace{2cm}u_f(X({t_N}),Y({t_N}))-f(x)
\,-
\int_0^{t_N}\partial_y^+ u_f(X(s),0)\,\dd L_s^0(Y)\\
-&
\sum_{i=1}^d\sum_{k=1}^p\int_0^{t_N}\partial_i u_f(X(s),Y(s))\,\sigma_{ik}(X(s))\,\dd B_k(s)
\,-\,
\sqrt{2}\int_0^{t_N}\partial_y^+ u_f(X(s),Y(s))\,b(Y(s))\,\dd B(s),
\end{align*}
when $n\to\infty$.

Indeed, the limit of 
$u_f^\varepsilon(X({t_N}),Y({t_N}))-u_f^\varepsilon(x,0),\,\varepsilon\downarrow 0$,
is obvious, by Remark \ref{afterBCxreg}.

Next, for $0\le s\le t_N$, partial integration yields
\[
\partial_y u_f^\varepsilon(X(s),Y(s))
\,=\,
\int_0^\infty\varrho_\varepsilon'(Y(s)-y')\,u_f(X(s),y')\,\dd y'
\,=\,
\int_{-\infty}^0\varrho_\varepsilon(y')\,\partial_y^+ u_f(X(s),Y(s)-y')\,\dd y',
\]
which converges to $\partial_y^+ u_f(X(s),Y(s))$, when $\varepsilon\downarrow 0$.
Furthermore,
\[
\sup_{0\le s\le t_N}|\partial_y u_f^\varepsilon(X(s),Y(s))|
\,\le\,
\hspace{-15pt}
\renewcommand{\arraystretch}{0.7}
\sup_{\begin{array}{c}
\scriptstyle \|x\|\le N\\
\scriptstyle y\in[0,N+\varepsilon]
\end{array}}
\renewcommand{\arraystretch}{1.0}
\hspace{-15pt}
|\partial_y^+ u_f(x,y)|
\underbrace{\int\varrho_\varepsilon(y')\,\dd y'}_{=\,1},
\quad\forall\,\varepsilon>0,
\]
where, by Corollary \ref{BCyreg}, the supremum on the right-hand side
is uniformly bounded in $\varepsilon$,
for any chosen sequence of $\varepsilon$-values converging to zero.
Therefore,
\begin{align*}
\int_0^{t_N}\partial_y u_f^\varepsilon(X(s),0)\,\dd L_s^0(Y)
&=
\int_0^{t_N}\partial_y u_f^\varepsilon(X(s),Y(s))\,\dd L_s^0(Y)\\
&\stackrel{\varepsilon\downarrow 0}{\longrightarrow}
\int_0^{t_N}\partial_y^+ u_f(X(s),Y(s))\,\dd L_s^0(Y)
\,=\,
\int_0^{t_N}\partial_y^+ u_f(X(s),0)\,\dd L_s^0(Y)
\end{align*}
follows by dominated convergence.

Lastly, it is not hard to see that all stochastic integrals 
on the left-hand side of (\ref{leftHand})
converge duly in $L^2(\pp_{\!\!x})$, when $\varepsilon\downarrow 0$, 
proving the claim we made above. We only discuss the case of
$\int_0^{t_N}\partial_y u_f^\varepsilon(X(s),Y(s))\,b(Y(s))\,\dd B(s)$---the 
other cases are easier because the coefficients $\sigma_{ik}$
are supposed to be continuous. Since $t_N\le\tau_N\wedge S_N\wedge\tau_N(Y)$,
we obtain that
\begin{align*}
&\,\ee_x\left[
\left(\rule{0pt}{12pt}\right.
\int_0^{t_N}\partial_y u_f^\varepsilon(X(s),Y(s))\,b(Y(s))\,\dd B(s)
\,-
\int_0^{t_N}\partial_y^+ u_f(X(s),Y(s))\,b(Y(s))\,\dd B(s)
\left.\rule{0pt}{12pt}\right)^2
\right]\\
=&\,
\ee_x\left[
\int_0^{t_N}
[\,\partial_y u_f^\varepsilon(X(s),Y(s))-\partial_y^+ u_f(X(s),Y(s))\,]^2
\,b^2(Y(s))\,\dd s
\right]\!,
\end{align*}
where $\ee_x[\int_0^{t_N}b^2(Y(s))\,\dd s\,]<+\infty$.
Then, again by dominated convergence,
the $\varepsilon$-limit of the right-hand side can be taken with respect to the integrand,
and this limit is zero, for all $s\in[0,t_N]$.

So, for $\pp_{\!\!x}(\{\mathfrak{Z}(t_N)=0\})=1$, the remaining step is to show that
\begin{equation}\label{remains}
\int_0^{t_N}{\cal L}_x u_f^\varepsilon(X(s),Y(s))\,\dd s
\,+
\int_0^{t_N}\partial_y^2 u_f^\varepsilon(X(s),Y(s))\,b^2(Y(s))\,\dd s
\end{equation}
almost surely converges to
\[
\int_0^{t_N}{\cal L}_x u_f(X(s),0)\,{\bf 1}_{\{0\}}(Y(s))\,\dd s,
\]
when $\varepsilon\downarrow 0$.

To see this, we use the construction of $[Y(t),\,t\ge 0]$ 
given in the proof of Lemma A.1 in the Appendix.
Also, recall the Lebesgue decomposition of the measures $m$ and $\hat{m}$,
where $m$ has got the properties (S1-3). Of course, the measure $\hat{m}$
satisfies the same properties, 
and (S1) implies that $b^{-2}$ is locally integrable on $[0,y_1)$.

Thus, we assume that $Y(t)=W(A_t^{-1}),\,t\ge 0$, where $[W(t),\,t\ge 0]$
is a standard one-dimensional Wiener process, 
given on $(\Omega,{\cal F},\pp_{\!\!x})$, and
\[
A_t\,=\,
\int_{\bfR}\frac{1}{2}\,L_t^y(W)\,\hat{m}(\dd y)
\,=\,
\int_{[0,\infty)}\frac{1}{2}\,L_t^y(W)\,\hat{m}(\dd y),\quad t\ge 0.
\]
%using the measure $\hat{m}$ introduced at the beginning of the Appendix.

Note that $A_\infty=+\infty$, a.s., as stated in \cite[Lemma (6.15)]{AS1998},
and $t\mapsto A_t$ is continuous because the measure $\hat{m}$
is finite on compact subsets of $[0,\infty)$.
Therefore, $s\,=\,A_{A^{-1}_s}$, for all $s\ge 0$, a.s., and time change yields
\begin{align*}
\int_0^{t_N}\partial_y^2 u_f^\varepsilon&(X(s),Y(s))\,b^2(Y(s))\,\dd s
\,\stackrel{a.s.}{=}
\int_0^{A_{A^{-1}_{t_N}}}\partial_y^2 u_f^\varepsilon(X(A_{A^{-1}_s}),W(A^{-1}_s))\,b^2(W(A^{-1}_s))\,\dd s\\
=&
\int_0^{A^{-1}_{t_N}}\partial_y^2 u_f^\varepsilon(X(A_s),W(s))\,b^2(W(s))\,\dd A_s
\end{align*}
which, when applying a kernel-type Fubini theorem, expands into
\begin{align*}
&
\int_{[0,\infty)}
\int_0^{A^{-1}_{t_N}}\partial_y^2 u_f^\varepsilon(X(A_s),W(s))\,\dd L^y_s(W)
\,b^2(y)\,[\frac{1}{2}b^{-2}(y)\,\dd y+\frac{m_0}{2}\delta_0(\dd y)+\frac{1}{2}n(\dd y)]\\
\stackrel{a.s.}{=}&
\int_{[0,\infty)}
\int_0^{A^{-1}_{t_N}}\partial_y^2 u_f^\varepsilon(X(A_s),y)\,\dd L^y_s(W)
\,b^2(y)\,[\frac{1}{2}b^{-2}(y)\,\dd y+\frac{m_0}{2}\delta_0(\dd y)+\frac{1}{2}n(\dd y)]\\
=&
\int_0^\infty
\int_0^{A^{-1}_{t_N}}\partial_y^2 u_f^\varepsilon(X(A_s),y)
\,\dd L^y_s(W)\,{\bf 1}_{\{b^2>0\}}(y)\,\dd y\times 1/2.
\end{align*}
Here, the second equality holds as, almost surely, for all $y$,
the increasing functions $s\mapsto L^y_s(W)$ only grow when $W(s)$ is at $y$,
while the last equality follows from
$({\cal N}\cup\{0,y_1\})\cap\bfR\subseteq\{b^2=0\}$.

However, since $b^{-2}$ is locally integrable on $[0,y_1)=[0,\infty)$,
the set $\{b^2=0\}$ has Lebesgue measure zero, and hence the last integral becomes
\[
\int_0^\infty
\int_0^{A^{-1}_{t_N}}\partial_y^2 u_f^\varepsilon(X(A_s),y)\,\dd L^y_s(W)\,\dd y\times 1/2
\stackrel{\varepsilon\downarrow 0}{\longrightarrow}
-\int_{(0,\infty)}
\int_0^{A^{-1}_{t_N}}{\cal L}_x u_f(X(A_s),y)\,\dd L^y_s(W)\,\hat{m}(\dd y)\times 1/2,
\]
where the limit can be verified as follows:
first, discretize the Stieltjes integral against local time
subject to a further limit of a sum over discrete time points $s_k$,
and then use (\ref{L dm integral}) to identify the limit of the measures 
\[
\partial_y^2 u_f^\varepsilon(X(A_{s_k}),y)\,\dd y
\,=\,
\dd_y[\partial_y u_f^\varepsilon(X(A_{s_k}),y)]
\,=\,
\int_{-\infty}^0\dd y'\,\varrho_\varepsilon(y')\,
\dd_y[\partial_y^+ u_f(X(A_{s_k}),y-y')],
\]
when $\varepsilon\downarrow 0$.

But,
\begin{align*}
&\hspace{2cm}-\int_{[0,\infty)}
\int_0^{A^{-1}_{t_N}}{\cal L}_x u_f(X(A_s),y)\,\dd L^y_s(W){\bf 1}_{(0,\infty)}(y)\,\frac{\hat{m}}{2}(\dd y)\\
\stackrel{a.s.}{=}&
-\int_0^{A^{-1}_{t_N}}{\cal L}_x u_f(X(A_s),W(s)){\bf 1}_{(0,\infty)}(W(s))\,\dd A_s
\,\stackrel{a.s.}{=}
-\int_0^{t_N}{\cal L}_x u_f(X(s),Y(s)){\bf 1}_{(0,\infty)}(Y(s))\,\dd s,
\end{align*}
which almost surely is the $\varepsilon$-limit of the second summand in (\ref{remains}).
Note the subtle point that the indicator ${\bf 1}_{(0,\infty)}$ is due to the fact
that the PDE only holds in the open half plane.

Finally, as $\int_0^{t_N}{\cal L}_x u_f(X(s),Y(s))\,\dd s$ is
the $\varepsilon$-limit of the first summand in (\ref{remains}), the $\varepsilon$-limit 
of (\ref{remains}) can almost surely be given by
\[
\int_0^{t_N}{\cal L}_x u_f(X(s),Y(s)){\bf 1}_{\{0\}}(Y(s))\,\dd s
\,=\,
\int_0^{t_N}{\cal L}_x u_f(X(s),0)\,{\bf 1}_{\{0\}}(Y(s))\,\dd s,
\]
proving part (a) of the lemma.

(b) Fix $t,\tau_N,S_N,\tau$ as in (a), but define
$t_N=t\wedge\tau_N\wedge S_N\wedge\tau_N(L_{\fatdot}^0(Y))\wedge\tau$.
It is again sufficient to show that 
$\pp_{\!\! x}(\{ \mathfrak{Z}(t_N) = 0 \}) = 1$, where  
\begin{align*}
\mathfrak{Z}(t) 
&\,\stackrel{\mbox{\tiny def}}{=}\,
u_f(X(t),Y(t))\,{\bf 1}_{\{t<\tau_{y_1}(Y)\}}
	+u_f(X(t),y_1-)\,{\bf 1}_{\{t\geq \tau_{y_1}(Y)\}}
	-f(x)\\
	&\,-
	\int_0^t\partial_y^+ u_f(X(s),0)\,\dd L_s^0(Y)\\
	&\,-
	\int_0^t{\cal L}_x u_f(X(s),0)\,{\bf 1}_{\{0\}}(Y(s))\,\dd s
	\,-
	\int_0^t{\bf 1}_{\{s\geq \tau_{y_1}(Y)\}}\,{\cal L}_x u_f(X(s),y_1-)\,\dd s\\
&\,-
\sum_{i=1}^d\sum_{k=1}^p\int_0^t{\bf 1}_{\{s<\tau_{y_1}(Y)\}}\,
\partial_i u_f(X(s),Y(s))\,\sigma_{ik}(X(s))\,\dd B_k(s)\\
&\,-
\sum_{i=1}^d\sum_{k=1}^p\int_0^t{\bf 1}_{\{s\geq \tau_{y_1}(Y)\}}\,
\partial_i u_f(X(s),y_1-)\,\sigma_{ik}(X(s))\,\dd B_k(s)\\
&\,-
\sqrt{2}\int_0^t{\bf 1}_{\{s<\tau_{y_1}(Y)\}}\,
\partial_y^+ u_f(X(s),Y(s))\,b(Y(s))\,\dd B(s).
\end{align*}

Observe that, different to the proof of (a), 
it is technically more demanding to work with the mollified version
of $u_f(x,y)$, when $y$ is close to $y_1<\infty$. 
We therefore choose $h\in(0,y_1)$
and build a function $u_{f,h}$ on the whole half-space by
\[
u_{f,h}(x,y)\,=\left\{\begin{array}{ccc}
u_{f}(x,y)&:&y<y_1-h/2,\\ u_{f}(x,y_1-h/2)&:&y\ge y_1-h/2.
\end{array}\right.
\]

Let $u_{f,h}^\varepsilon$ denote the mollified version of $u_{f,h}$ using the
same mollifier $\varrho$ as in the proof of (a),
and let $Y_h$ denote the process $[Y(t\wedge\tau_{y_1-h}(Y)),\,t\ge 0]$.
Note that, for any such $h$, the stopping time
$\tau_{y_1-h}(Y)$ is almost surely finite, because $m([0,y_1-h])<+\infty$.

We first apply the classical version of It\^o's lemma to 
$u_{f,h}^\varepsilon(X({t_N}),Y_h({t_N}))$
and let $\varepsilon$ go to zero. We then prove our claim by letting $h$ go to zero, too. 

Recall that changing $b$ on a set of Lebesgue measure zero would not change
the law of $Y$, thus it would not change the law of $Y_h$, either.
Without loss of generality, we can therefore assume that $b(y_1-h)=0$.
As a consequence, $Y_h$ satisfies the SDE,
\[
\dd Y_h(t)\,=\,\sqrt{2}\,b(Y_h(t))\,\dd B(t)+\dd L^0_t(Y_h),
\]
and hence applying It\^o's formula to $u_{f,h}^\varepsilon(X({t_N}),Y_h({t_N}))$
gives an equation identical to (\ref{leftHand})
with $u_f^\varepsilon$ and $Y$ replaced by $u_{f,h}^\varepsilon$ and $Y_h$, respectively.

Then,
as in the proof of part (a), for any sequence of $\varepsilon$-values converging to zero, 
there exists a subsequence $(\varepsilon_n)_{n=1}^{\infty}$ 
such that, when $n \to \infty$, the left-hand side of this equation 
converges almost surely to
\begin{equation}\label{itoYh}
\left.
\begin{array}{c}
u_{f}(X({t_N}),Y({t_N\wedge\tau_{y_1-h}(Y)}))-f(x)\\
\rule{0pt}{15pt}
-\int_0^{t_N}{\bf 1}_{\{s<\tau_{y_1-h}(Y)\}}\,
\partial^+_y u_f(X(s),0)\,\dd L_s^0(Y)\\
\rule{0pt}{15pt}
-\sum_{i=1}^d\sum_{k=1}^p\int_0^{t_N}\partial_i u_{f}(X(s),Y(s\wedge\tau_{y_1-h}(Y)))\,
\sigma_{ik}(X(s))\,\dd B_k(s)\\
\rule{0pt}{15pt}
-\,\sqrt{2}\int_0^{t_N}{\bf 1}_{\{s<\tau_{y_1-h}(Y)\}}\,
\partial^+_y u_{f}(X(s),Y(s))\,b(Y(s))\,\dd B(s),
\end{array}
\right\}
\end{equation}
where we have used that $u_f=u_{f,h}$, on $\bfR^d\times[0,y_1-h/2)$,
that $L_s^0(Y_h)$ is constant, for $s\ge\tau_{y_1-h}(Y)$,
and that $b(y_1-h)=0$.

The next step is to find the $\varepsilon$-limit of the right-hand side, i.e.
\begin{equation}\label{toFind}
\int_0^{t_N}{\cal L}_x u_{f,h}^\varepsilon(X(s),Y_h(s))\,\dd s
\,+
\int_0^{t_N}\partial_y^2 u_{f,h}^\varepsilon(X(s),Y_h(s))\,b^2(Y_h(s))\,\dd s,
\end{equation}
the second integral of which can be written as
\ 
\[
\int_0^{t_N}{\bf 1}_{\{s<\tau_{y_1-h}(Y)\}}\,
\partial_y^2 u_{f,h}^\varepsilon(X(s),Y(s))\,b^2(Y(s))\,{\bf 1}_{\{Y(s)<y_1-h\}}\,\dd s,
\]
since $Y_h$ is absorbing at $y_1-h$, and $b(y_1-h)=0$.
But $s\,=\,A_{A^{-1}_s}$, for all $s<\tau_{y_1-h}(Y)$, a.s.,
and hence, when $\varepsilon\downarrow 0$, the second integral converges almost surely to 
\[
-\int_0^{t_N}{\bf 1}_{\{s<\tau_{y_1-h}(Y)\}}\,
{\cal L}_x u_f(X(s),Y(s))\,{\bf 1}_{(0,y_1-h]}(Y(s))\,\dd s,
\]
applying time change and partial integration as in the proof of (a). 

By dominated convergence, when $\varepsilon\downarrow 0$, the first integral of (\ref{toFind}) converges to
\[
\int_0^{t_N}{\bf 1}_{\{s<\tau_{y_1-h}(Y)\}}\,
{\cal L}_x u_{f}(X(s),Y(s))\,\dd s
\,+
\int_0^{t_N}{\bf 1}_{\{s\ge\tau_{y_1-h}(Y)\}}\,
{\cal L}_x u_{f}(X(s),y_1-h)
\,\dd s,
\]
so that summing up gives the following almost sure limit of (\ref{toFind}),
\[
\int_0^{t_N}{\bf 1}_{\{s<\tau_{y_1-h}(Y)\}}\,
{\cal L}_x u_f(X(s),0)\,{\bf 1}_{\{0\}}(Y(s))\,\dd s
\,+
\int_0^{t_N}{\bf 1}_{\{s\ge\tau_{y_1-h}(Y)\}}\,{\cal L}_x u_f(X(s),y_1-h)\,\dd s,
\]
when $\varepsilon\downarrow 0$.

Of course, being the limit of a left-hand and a right-hand side of It\^o's formula, respectively,
(\ref{itoYh}) almost surely equals the $\varepsilon$-limit of (\ref{toFind}),
and hence
\begin{align}\label{rightHandAbsorbed}
0\,&\stackrel{a.s.}{=}
u_{f}(X(t_N),Y(t_N))\,{\bf 1}_{\{t_N<\tau_{y_1-h}(Y)\}}
	+u_f(X(t_N),y_1-h)\,{\bf 1}_{\{t_N\ge\tau_{y_1-h}(Y)\}}
	-f(x)\nonumber\\
	&\,-
\int_0^{t_N}{\bf 1}_{\{s<\tau_{y_1-h}(Y)\}}\,
\partial_y^+ u_f(X(s),0)\,\dd L_s^0(Y)\nonumber\\
	&\,-
\int_0^{t_N}{\bf 1}_{\{s<\tau_{y_1-h}(Y)\}}\,
{\cal L}_x u_f(X(s),0)\,{\bf 1}_{\{0\}}(Y(s))\,\dd s
\,-
\int_0^{t_N}{\bf 1}_{\{s\ge\tau_{y_1-h}(Y)\}}{\cal L}_x u_f(X(s),y_1-h)\,\dd s
\nonumber\\
&\,-
\sum_{i=1}^d\sum_{k=1}^p\int_0^{t_N}{\bf 1}_{\{s<\tau_{y_1-h}(Y)\}}\,
\partial_i u_{f}(X(s),Y(s))\,\sigma_{ik}(X(s))\,\dd B_k(s)\nonumber\\
&\,-
\sum_{i=1}^d\sum_{k=1}^p\int_0^{t_N}{\bf 1}_{\{s\ge\tau_{y_1-h}(Y)\}}\,
\partial_i u_f(X(s),y_1-h)\,\sigma_{ik}(X(s))\,\dd B_k(s)\nonumber\\
&\,-
\sqrt{2}\int_0^{t_N}{\bf 1}_{\{s<\tau_{y_1-h}(Y)\}}\,
\partial_y^+ u_{f}(X(s),Y(s))\,b(Y(s))\,\dd B(s).
\end{align}

Eventually, we choose a whole sequence of $h$-values converging to zero.
Since countably many $h$-values still form a set of Lebesgue measure zero,
$b(y_1-h)$ can be assumed to be zero, for any $h$ in this countable set,
and we have to show that the $h$-limit of the above equation's
right-hand side almost surely equals $\mathfrak{Z}(t_N)$.

First, 
$\{\tau_{y_1-h}(Y)>t_N\}\uparrow\{\tau_{y_1}(Y)>t_N\}$,
when $h\downarrow 0$, and $\mathfrak{Z}(t_N)$ equals 
the right-hand side of (\ref{rightHandAbsorbed}), on each $\{\tau_{y_1-h}(Y)>t_N\}$. 
Therefore, without loss of generality,
we only show that the $h$-limit of this right-hand side
almost surely equals $\mathfrak{Z}(t_N)$ under the assumption that
$\tau_{y_1}(Y)$ is finite.

Under this assumption,
$\tau_{y_1}(Y)-\tau_{y_1-h}(Y)\to 0$, almost surely, when $h\downarrow 0$,
and hence, using dominated convergence,
%once more combining dominated convergence and $L^2(\pp_{\!\!x})$-convergence,
%there exists at least a subsequence $(h_n)_{n=1}^{\infty}$ 
%such that, when $n \to \infty$, 
all summands on the right-hand side of (\ref{rightHandAbsorbed}),
\underline{except the last one}, can be shown to 
converge in $L^2(\pp_{\!\!x})$ to their $\mathfrak{Z}(t_N)$-counterparts,
when $h\downarrow 0$, in a straight forward way
(recall that $t_N$ satisfies $t_N\le\tau_N(L_{\fatdot}^0(Y))$, by definition).

Identifying the limit of the last summand is more involved
because $\partial_y^+ u_f(\cdot,y){\bf 1}_{\{\|\cdot\|\le N\}}$ 
may become unbounded, when $y$ approaches $y_1$.
Recall that $\partial_y^+ u_f(\cdot,0){\bf 1}_{\{\|\cdot\|\le N\}}$
is bounded by Corollary \ref{BCyreg}.

However, as the left-hand side of (\ref{rightHandAbsorbed}) is zero,
if all other summands converge in $L^2(\pp_{\!\!x})$,
then the last summand does, too, so that
\[
\mbox{$\lim_{h\downarrow 0}$}\;
\ee_x\left[\left(\rule{0pt}{12pt}\right.
\int_0^{t_N}{\bf 1}_{\{s<\tau_{y_1-h}(Y)\}}\,
\partial_y^+ u_{f}(X(s),Y(s))\,b(Y(s))\,\dd B(s)
\left.\rule{0pt}{12pt}\right)^2\right]
\,<\,+\infty.
\]
But,
\begin{align*}
&\mbox{$\lim_{h\downarrow 0}$}\;
\ee_x\left[\left(\rule{0pt}{12pt}\right.
\int_0^{t_N}{\bf 1}_{\{s<\tau_{y_1-h}(Y)\}}\,
\partial_y^+ u_{f}(X(s),Y(s))\,b(Y(s))\,\dd B(s)
\left.\rule{0pt}{12pt}\right)^2\right]\\
&=\,
\mbox{$\lim_{h\downarrow 0}$}\;
\ee_x\,[
\int_0^{t_N}{\bf 1}_{\{s<\tau_{y_1-h}(Y)\}}
\left(\rule{0pt}{11pt}\right.
\partial_y^+ u_{f}(X(s),Y(s))\,b(Y(s))
\left.\rule{0pt}{11pt}\right)^2
\dd s\,]\\
&=\,
\ee_x\,[
\int_0^{t}{\bf 1}_{\{s<t_N\wedge\tau_{y_1}(Y)\}}
\left(\rule{0pt}{11pt}\right.
\partial_y^+ u_{f}(X(s),Y(s))\,b(Y(s))
\left.\rule{0pt}{11pt}\right)^2
\dd s\,],
\end{align*}
where the last line follows by monotone convergence.
The above justifies that
\[
{\bf 1}_{[0,t_N\wedge\tau_{y_1}(Y))}
\left(\rule{0pt}{11pt}\right.
\partial_y^+ u_{f}(X(\cdot),Y(\cdot))\,b(Y(\cdot))
\left.\rule{0pt}{11pt}\right)^2
\in
L^2(\Omega\times[0,t]),
\]
and hence
\begin{align*}
&\mbox{$\lim_{h\downarrow 0}$}\;
\ee_x\left[\left(\rule{0pt}{12pt}\right.
\int_0^{t_N}{\bf 1}_{[\tau_{y_1-h}(Y),\tau_{y_1}(Y))}(s)\,
\partial_y^+ u_{f}(X(s),Y(s))\,b(Y(s))\,\dd B(s)
\left.\rule{0pt}{12pt}\right)^2\right]\\
&=\,
\mbox{$\lim_{h\downarrow 0}$}\;
\ee_x\,[
\int_0^{t_N}{\bf 1}_{[\tau_{y_1-h}(Y),\tau_{y_1}(Y))}(s)\,
\left(\rule{0pt}{11pt}\right.
\partial_y^+ u_{f}(X(s),Y(s))\,b(Y(s))
\left.\rule{0pt}{11pt}\right)^2
\dd s\,]\,=\,0,
\end{align*}
by dominated convergence, proving part (b) of the lemma.

(c) Choose $h\in(0,y_1)$, 
and define $t_N,u_{f,h},\,u_{f,h}^\varepsilon$ as in the proof of (b).
Since $Y$ is not absorbing at $y_1<\infty$, 
the local time $L^{y_1}_{\,\fatdot}(Y)$ does not vanish,
and hence the classical version of It\^o's lemma gives
\begin{align}\label{newIto}
&\hspace{1pt}u_{f,h}^\varepsilon(X({t_N}),Y({t_N}))-u_{f,h}^\varepsilon(x,0)
\,-
\int_0^{t_N}\partial_y u_{f,h}^\varepsilon(X(s),0)\,\dd L_s^0(Y)
+\int_0^{t_N}\partial_y u_{f,h}^\varepsilon(X(s),y_1)\,\dd L_s^{y_1}(Y)\nonumber\\
-&
\sum_{i=1}^d\sum_{k=1}^p\int_0^{t_N}\partial_i u_{f,h}^\varepsilon(X(s),Y(s))\,\sigma_{ik}(X(s))\,\dd B_k(s)
\,-\,
\sqrt{2}\int_0^{t_N}\partial_y u_{f,h}^\varepsilon(X(s),Y(s))\,b(Y(s))\,\dd B(s)\nonumber\\
&\hspace{1.2cm}\,\stackrel{a.s.}{=}
\int_0^{t_N}{\cal L}_x u_{f,h}^\varepsilon(X(s),Y(s))\,\dd s
\,+
\int_0^{t_N}\partial_y^2 u_{f,h}^\varepsilon(X(s),Y(s))\,b^2(Y(s))\,\dd s.
\end{align}

However, as $u_{f,h}^\varepsilon(x,\cdot)$ is constant on $[y_1-h/2,\infty)$, 
for all $x\in\bfR^d$, the term involving $L^{y_1}_{\,\fatdot}(Y)$ vanishes,
so that the $\varepsilon$-limit of the above left-hand side almost surely equals
%\begin{equation}
%\left.
\[
\begin{array}{c}
u_{f,h}(X({t_N}),Y({t_N}))-f(x)\\
\rule{0pt}{15pt}
-\int_0^{t_N}\partial^+_y u_f(X(s),0)\,\dd L_s^0(Y)\\
\rule{0pt}{15pt}
-\sum_{i=1}^d\sum_{k=1}^p\int_0^{t_N}\partial_i u_{f,h}(X(s),Y(s))\,\sigma_{ik}(X(s))\,\dd B_k(s)\\
\rule{0pt}{15pt}
-\,\sqrt{2}\int_0^{t_N}\partial^+_y u_{f,h}(X(s),Y(s))\,b(Y(s))\,\dd B(s),
\end{array}
\]
%\right\}
%\end{equation}
by the same arguments used in the proof of (a).

Furthermore, unlike in case (b) where $y_1$ is absorbing,
it now must hold that $\hat{m}([0,y_1])<+\infty$ (see the beginning of Remark \ref{goody}),
and hence $s\,=\,A_{A^{-1}_s}$, for all $s\ge 0$, a.s. Therefore, the $\varepsilon$-limit
of the second integral on the right-hand side of (\ref{newIto}) can almost surely be given by
\begin{equation}\label{2nd integral}
-\int_0^{t_N}\hspace{-8pt}
{\cal L}_x u_f(X(s),Y(s))\,{\bf 1}_{(0,y_1-\frac{h}{2})}(Y(s))\,\dd s\,
+\!\int_0^{A^{-1}_{t_N}}\hspace{-5pt}
\Delta[\partial_y^+ u_{f,h}(X(A_s),y_1-\frac{h}{2}\,)]\,\dd L^{y_1-\frac{h}{2}}_s(W)/2,
\end{equation}
again applying time change and partial integration as in the proof of (a). 
Here, the `artificial' jump of $\partial_y^+ u_{f,h}(X(A_s),\cdot)$ at $y_1-h/2$, 
which we created when extending $u_{f}$ to half-space, equals
\begin{equation}\label{extra jump}
\Delta[\partial_y^+ u_{f,h}(X(A_s),y_1-h/2)]
\,=\,
-\partial_y^- u_{f}(X(A_s),y_1-h/2).
\end{equation}

All in all, letting $\varepsilon$ go to zero on both sides of (\ref{newIto}) yields
\begin{align*}
0\,&\stackrel{a.s.}{=}
u_{f,h}(X(t_N),Y(t_N))\,{\bf 1}_{\{Y(t_N)<y_1\}}
	+u_f(X(t_N),y_1-\frac{h}{2}\,)\,{\bf 1}_{\{Y(t_N)=y_1\}}
	-f(x)\\
&\,-
\int_0^{t_N}\partial_y^+ u_f(X(s),0)\,\dd L_s^0(Y)
+\int_0^{A^{-1}_{t_N}}
\partial_y^- u_{f}(X(A_s),y_1-\frac{h}{2}\,)\,\dd L^{y_1-\frac{h}{2}}_s(W)\times 1/2\\
	&\,-
	\int_0^{t_N}{\cal L}_x u_f(X(s),0)\,{\bf 1}_{\{0\}}(Y(s))\,\dd s
	\,-
	\int_0^{t_N}{\cal L}_x u_f(X(s),y_1-\frac{h}{2})\,{\bf 1}_{\{Y(s)\ge y_1-\frac{h}{2}\}}\,\dd s\\
&\,-
\sum_{i=1}^d\sum_{k=1}^p\int_0^{t_N}
\partial_i u_{f,h}(X(s),Y(s))\,\sigma_{ik}(X(s))\,{\bf 1}_{\{Y(s)<y_1\}}\,\dd B_k(s)\\
&\,-
\sum_{i=1}^d\sum_{k=1}^p\int_0^{t_N}
\partial_i u_f(X(s),y_1-\frac{h}{2})\,\sigma_{ik}(X(s))\,{\bf 1}_{\{Y(s)=y_1\}}\,\dd B_k(s)\\
&\,-
\sqrt{2}\int_0^{t_N}
\partial_y^+ u_{f,h}(X(s),Y(s))\,b(Y(s))\,\dd B(s),
\end{align*}
and it needs to be shown that
the $h$-limit of the above right-hand side almost surely coincides with $\mathfrak{Z}(t_N)$,
where $\mathfrak{Z}(t)$ is the case-(c)-version of what has been defined in the proofs of (a),(b).

By Corollary \ref{BCyreg}, $\partial_y^+ u_f$ always behaves well
near the boundary of $\bfR^d\times(0,y_1)$ at zero.
In case (c), Corollary \ref{BCyreg} also implies that $\partial_y^+ u_f$
behaves well near the boundary of $\bfR^d\times(0,y_1)$ at $y_1$.
So, all terms except 
$\int_{[0,A^{-1}_{t_N}]}\partial_y^- u_{f}(X(A_s),y_1-\frac{h}{2})\,\dd L^{y_1-\frac{h}{2}}_s(W)/2$
can be shown to converge almost surely or in $L^2(\pp_{\!\!x})$
to their $\mathfrak{Z}(t_N)$-counterparts in a straight forward way, when $h\downarrow 0$.

Below, we verify that
\[
\frac{1}{2}
\int_0^{A^{-1}_{t_N}}\partial_y^- u_{f}(X(A_s),y_1-\frac{h}{2})\,\dd L^{y_1-\frac{h}{2}}_s(W)
\stackrel{a.s.}{\longrightarrow}
\int_0^{t_N}\partial_y^- u_f(X(s),y_1-)\,\dd L_s^{y_1}(Y),
\]
when $h\downarrow 0$, finishing the proof of part (c) and the lemma.

Since \cite[Lemma (6.34)(i)]{AS1998} gives
\[
\int_0^{t_N}\partial_y^- u_f(X(s),y_1-)\,\dd L_s^{y_1}(Y)
\,\stackrel{a.s.}{=}\,
\int_0^{A^{-1}_{t_N}}\partial_y^- u_f(X(A_s),y_1-)\,\dd L_s^{y_1}(W)/2\,,
\]
we can almost surely bound
\[
|\int_0^{A^{-1}_{t_N}}\partial_y^- u_{f}(X(A_s),y_1-\frac{h}{2})\,\dd L^{y_1-\frac{h}{2}}_s(W)/2
\,-
\int_0^{t_N}\partial_y^- u_f(X(s),y_1-)\,\dd L_s^{y_1}(Y)\,|
\]
by
\begin{align*}
&\hspace{1cm}\frac{1}{2}\int_0^{A^{-1}_{t_N}}|\,
\partial_y^- u_{f}(X(A_s),y_1-\frac{h}{2})
-
\partial_y^- u_{f}(X(A_s),y_1-)\,|
\,\dd L^{y_1-\frac{h}{2}}_s(W)\\
+\,&\frac{1}{2}\,|
\int_0^{A^{-1}_{t_N}}\partial_y^- u_{f}(X(A_s),y_1-)\,\dd L^{y_1-\frac{h}{2}}_s(W)
\,-
\int_0^{A^{-1}_{t_N}}\partial_y^- u_{f}(X(A_s),y_1-)\,\dd L^{y_1}_s(W)\,|,
\end{align*}
and the task is to show that both summands vanish a.s., when $h\downarrow 0$.

First, observe that $\|X(A_s)\|\le N$, for $s\le A^{-1}_{t_N}$, a.s.,
and that $0<h/2<y_1/2$ by our choice of $h$. Therefore,
the first summand is bounded by
\vspace{-15pt}
\[
\sup_{\|x\|\le N}
|\,\partial_y^- u_{f}(x,y_1-\frac{h}{2})-\partial_y^- u_{f}(x,y_1-)|
\;\;\times
\overbrace{
\sup_{0\le h'\le\frac{y_1}{2}}\,L_{A^{-1}_{t_N}}^{y_1-h'}(W),
}^{a.s.\,finite}
\]
where it follows from the proof of Corollary \ref{BCyreg} that
\[
\sup_{\|x\|\le N}
|\,\partial_y^- u_{f}(x,y_1-\frac{h}{2})-\partial_y^- u_{f}(x,y_1-)|
\longrightarrow 0,\quad h\downarrow 0.
\]
Thus the limit of the first summand almost surely vanishes.

For the second summand, note that $A^{-1}_{t_N}$ is almost surely finite,
and hence, for almost every $\omega\in\Omega$, and each $y\in\bfR$,
$s\mapsto L_s^y(W)(\omega)$ can be considered a continuous distribution function 
of a finite measure $\nu_y(\omega)$ on $[0,A^{-1}_{t_N(\omega)}(\omega)]$.
Since $L_s^y(W)(\omega)$ is continuous in $y$, too, 
the measures $\nu_{y_1-h/2}(\omega)$ converge weakly to $\nu_{y_1}(\omega)$, 
when $h$ goes to zero.
As a consequence, when $h\downarrow 0$, 
the second summand converges almost surely to zero,
because $s\mapsto A_s$ is continuous, on the one hand, and because 
$x\mapsto\partial_y^- u_{f}(x,y_1-)$
is by assumption a bounded continuous function, for $\|x\|\le N$, on the other.
\end{proof}
%
%the continuity assumption on $\partial_y^- u_{f}$ is crucial as the limit
%of the second summand is not necessarily zero if $\partial_y^- u_{f}$ was
%bounded and semi-continuous---then one just gets inequalities from
%weak convergence. and $\partial_y^- u_{f}$ just being the limit of
%continuous functions does not have to be semi-continuous, even.
%
\begin{proof}[Proof of \textbf{Lemma \ref{analytic for Y}}]
Recall that $y_1$ is assumed to be finite.

(i) It has already been pointed out at the beginning of Remark \ref{goody}
that if $Y$ is not absorbing at $y_1$ then $\hat{m}([0,y_1])<+\infty$.
Vice versa, if $\hat{m}([0,y_1])<+\infty$,
then the diffusion $Y$ constructed in the proof of Lemma A.1 in the Appendix
can never be absorbing at $y_1$, because $A_t<+\infty$, a.s., for all $t\ge 0$.
Thus, $Y$ is not absorbing at $y_1$ if and only if $\hat{m}([0,y_1])<+\infty$,
which is equivalent to the statement to be proven under (i).

(ii) Again by the construction given in the proof of Lemma A.1 in the Appendix,
$\tau_{y_1}(Y)\stackrel{a.s.}{=}+\infty$
if and only if $\lim_{t\uparrow\tau_{y_1}(W)}A_t\stackrel{a.s.}{=}+\infty$,
and it is a special case of \cite[Proposition (A1.8)]{AS1998} that
$\lim_{t\uparrow\tau_{y_1}(W)}A_t\stackrel{a.s.}{=}+\infty$
if and only if
$\int_{[0,y_1)}(y_1-y)\,m(\dd y)=+\infty$.
So, since $\int_{[0,y_1)}(y_1-y)\,m(\dd y)=+\infty$ if and only if
$\int_{[0,y_1)}(y_1-y)\,\hat{m}(\dd y)=+\infty$,
part (ii) of the lemma follows.
\end{proof}

\begin{proof}[Proof of \textbf{Theorem \ref{trace_gen_theorem}}]
(a) Fix $x\in\bfR^d$, and choose an arbitrary but small $t>0$.
Define the stopping times $\tau_N,\,S_N$ as at the beginning of 
the proof of Lemma \ref{ito}, and set 
\begin{equation}\label{defi t_N}
t_N
\,=\,
T_t\wedge\tau_N\wedge S'_N,
\quad\mbox{where}\quad
S'_N\,=\,S_N\wedge\tau_N(Y).
\end{equation}

Then, for fixed $N\ge 1$, Lemma \ref{ito}(a) yields 
	\begin{align*}
		u_f(X(t_N),Y(t_N))-f(x)  
		&\,\stackrel{a.s.}{=}
		\int_0^{t_N}\partial_y^+ u_f(X(s),0)\,\dd L_s^0(Y) 
		\,+
		\int_0^{t_N}{\cal L}_x u_f(X(s),0)\,{\bf 1}_{\{0\}}(Y(s))\,\dd s\\
		&\hspace*{-4.3cm}+
		\sum_{i=1}^d\sum_{k=1}^p\int_0^{t_N}\partial_i u_f(X(s),Y(s))\,\sigma_{ik}(X(s))\,\dd B_k(s)
		\,+
		\sqrt{2}\int_0^{t_N}\partial_y^+ u_f(X(s),Y(s))\,b(Y(s))\,\dd B(s);
	\end{align*}
and since $t_N\le\tau_N\wedge S'_N$,
all stochastic integrals above  have zero expectation, so that
\[
\lim_{N\uparrow\infty}\,
\ee_x[u_f(X(t_N),Y(t_N))-f(x)]
\,=\,
\lim_{N\uparrow\infty}\,
\ee_x\left[
\int_0^{t_N}\left(\rule{0pt}{12pt}
\partial_y^+ u_f(X(s),0)+m_0\,{\cal L}_x u_f(X(s),0)
\right)\dd L_s^0(Y)
\right]\!,
\]
where we also used 
\begin{equation}\label{ds2dL}
\int_0^{t_N}{\cal L}_x u_f(X(s),0)\,{\bf 1}_{\{0\}}(Y(s))\,\dd s
\,\stackrel{a.s.}{=}\,
\int_0^{t_N}m_0\,{\cal L}_x u_f(X(s),0)\,\dd L_s^0(Y),
\end{equation}
which is an easy consequence of \cite[Theorem (5.27)]{AS1998},
when the scale function is the identity.

Now, recall that $Y(t)=W(A_t^{-1}),\,t\ge 0$, where $A_t$ is given in the 
proof of Lemma A.1 in the Appendix. 
Then, since \cite[Lemma (6.34)(i)]{AS1998} yields 
\[
L^0_t(Y)\,=\,\frac{1}{2}L^0_{A_t^{-1}}(W),\quad t\ge 0,\;\mbox{a.s.},
\]
we have that
\begin{equation}\label{forAbsorb}
L^0_\infty(Y)\,\stackrel{a.s.}{=}\,+\infty
\quad\mbox{if and only if}\quad
A_\infty^{-1}\,\stackrel{a.s.}{=}\,+\infty.
\end{equation}
But, 
$A_\infty^{-1}\,\stackrel{a.s.}{=}\,+\infty$
follows by the same arguments used to show that
$s\,=\,A_{A^{-1}_s}$, for all $s\ge 0$, a.s.,
in the proof of part (a) of Lemma \ref{ito},
so we do have that
$L^0_\infty(Y)\,\stackrel{a.s.}{=}\,+\infty$,
for part (a) of this proof.

As a consequence,
since $\tau_N\wedge S'_N$ grows to infinity,
$N\to\infty$, we have that,
for almost every $\omega\in\Omega$, there exists $N(\omega)$ such that
$t_N(\omega)=T_t(\omega)$, %\,\&\,Y(t_N(\omega))(\omega)=0$,
for all $N\ge N(\omega)$,
and hence
\[
\lim_{N\uparrow\infty}\,
\ee_x[\,u_f(X(t_N),Y(t_N))-f(x)\,]
\,=\,
\ee_x[\,f(X(T_t))-f(x)\,],
\]
by dominated convergence, even if $f$ was not continuous.
	
	On the other hand, since $t_N \leq T_t,\,N \geq 1$, 
and since $\partial^+_y u_f(\cdot, 0)+m_0\,\mathcal{L}_xu_f(\cdot, 0)$ is bounded,
again by dominated convergence,
	\begin{align*}
		\lim_{N \uparrow \infty} \ee_x\Big[&\int_{0}^{t_N} \Big(\partial^+_y u_f(X(s), 0) + m_0\,\mathcal{L}_x u_f(X(s), 0)  \Big) \dd L^0_s(Y)  \Big] \\
		&= \ee_x\Big[\int_{0}^{T_t} \Big(\partial^+_y u_f(X(s), 0) + m_0\,\mathcal{L}_x u_f(X(s), 0)  \Big) \dd L^0_s(Y)  \Big] \\
		&= \ee_x\Big[\int_{0}^{t} \Big(\partial^+_y u_f(X(T_s), 0) + m_0\,\mathcal{L}_x u_f(X(T_s), 0)  \Big) \dd s  \Big].
	\end{align*}

All in all, to finish the proof of part (a) of the theorem, it remains to show that
\begin{align*}
\mbox{$\lim_{t\downarrow 0}$}\,
&
\ee_x\Big[\,
\frac{1}{t}
\int_0^{t}\left(\rule{0pt}{12pt}
\partial_y^+ u_f(X(T_s),0)+m_0\,{\cal L}_x u_f(X(T_s),0)
\right)\dd s
\,\Big]\\
=&\,
\partial_y^+ u_f(x,0)+m_0\,{\cal L}_x u_f(x,0),
\end{align*}
which follows by dominated convergence, because  
$\partial^+_y u_f(\cdot, 0)+m_0\,\mathcal{L}_xu_f(\cdot, 0)\in C_b(\bfR^d)$.

(b) The proofs in the cases (b1) and (b2) are identical for a large part,
and that's why we name this large part of the proof (b),
and we only go into the differences between (b1) \&\ (b2) at the end.
By Lemma \ref{analytic for Y}, the point $y_1$ will be absorbing in this case.

If $y_1<\infty$ and $Y$ is absorbing at $y_1$,
then $A_\infty^{-1}=\tau_{y_1}(W)$,
by the construction of $Y$---see the Appendix, proof of Lemma A.1.
Of course, $\tau_{y_1}(W)<+\infty$, a.s., and hence 
$L^0_\infty(Y)\,<\,+\infty$, a.s., too, by (\ref{forAbsorb}).
As a consequence, $\pp_{\!\!x}(\{T_t=+\infty\})>0$, for any $t>0$,
and this positive probability can be determined using Lemma A.1.
Indeed, since 
$\ee_x[e^{-\lambda T_t}{\bf 1}_{\{T_t<+\infty\}}]=e^{-t\psi(\lambda)},\,\lambda>0$,
we obtain that
\[
\pp_{\!\!x}(\{T_t=+\infty\})
\,=\,
1-e^{-t\psi(0)},
\]
when $\lambda$ goes to zero.

Note that the above reasoning also implies that $\psi(0)$ has to be \underline{positive} in case (b),
though we obviously had $\psi(0)=0$ in case (a).

Now, fix $x\in\bfR^d,\,t>0$, and define $t_N$ by (\ref{defi t_N}), 
but using $S'_N=S_N\wedge\tau_{y_1-1/N}(Y_h)$, instead,
where $Y_h$ again denotes the process $[Y(t\wedge\tau_{y_1-h}(Y)),\,t\ge 0]$,
for some $h\in(0,y_1)$.
Of course, if $Y$ does hit $y_1$ then it would hit it after 
hitting $y_1-h$, that is
\begin{equation}\label{zero before}
\pp_{\!\!x}(\{t_N<\tau_{y_1}(Y)\})\,=\,1,
\quad
N=1,2,\dots,
\end{equation}
though $\tau_{y_1-h}(Y)$ converges to $\tau_{y_1}(Y)$, when $h\downarrow 0$,
whether $\tau_{y_1}(Y)$ is finite or not.
Furthermore, if $y_1<\infty$ is absorbing then
$L^0_{\tau_{y_1}(Y)}(Y)$ always equals $L^0_\infty(Y)$ almost surely,
because $Y$ can only be absorbed at $y_1>0$ after hitting zero for the last time.

All in all, for fixed $N\ge 1$, Lemma \ref{ito}(b) yields
\begin{align*}
%f(X(t_N))-f(x)
&\,%\stackrel{a.s.}{=}\,
                u_f(X(t_N),Y(t_N))-f(x)  \\
                &\,\stackrel{a.s.}{=}
                \int_0^{t_N}\partial_y^+ u_f(X(s),0)\,\dd L_s^0(Y) 
                \,+
                \int_0^{t_N}{\cal L}_x u_f(X(s),0)\,{\bf 1}_{\{0\}}(Y(s))\,\dd s
                \,+\,\mbox{mart}(t_N),
\end{align*}
where $\mbox{mart}(t_N)$ denotes the sum of the stochastic integrals, at stopping time $t_N$.

Taking expectations on both sides, we therefore obtain that
\[
\lim_{N\uparrow\infty}\,
\ee_x[\,u_f(X(t_N),Y(t_N))-f(x)\,]
\,=\,
\lim_{N\uparrow\infty}\,
\ee_x\left[
\int_0^{t_N}\left(\rule{0pt}{12pt}
\partial_y^+ u_f(X(s),0)+m_0\,{\cal L}_x u_f(X(s),0)
\right)\dd L_s^0(Y)
\right]\!,
\]
using (\ref{ds2dL}) as in the proof of part (a).

First, we deal with the limit of the above right-hand side.
Time change yields
\begin{align*}
&\int_0^{t_N}\left(\rule{0pt}{12pt}
\partial_y^+ u_f(X(s),0)+m_0\,{\cal L}_x u_f(X(s),0)
\right)\dd L_s^0(Y)\\
&=
\int_0^{L^0_{t_N}(Y)}\left(\rule{0pt}{12pt}
\partial_y^+ u_f(X(T_s),0)+m_0\,{\cal L}_x u_f(X(T_s),0)
\right)\dd s,
\end{align*}
where $t_N$ grows to $T_t\wedge\tau_{y_1-h}(Y)$, when $N\to\infty$.
So, since  $\partial^+_y u_f(\cdot, 0)+m_0\,\mathcal{L}_xu_f(\cdot, 0)$ is bounded,
by dominated convergence,
the limit of the right-hand side equals
\begin{align*}
&\ee_x\Big[
\int_{0}^{t\wedge L^0_{T_t\wedge\tau_{y_1-h}(Y)}(Y)}
\Big(\partial^+_y u_f(X(T_s), 0) + m_0\,\mathcal{L}_x u_f(X(T_s), 0)  \Big) 
\dd s\,
{\bf 1}_{\{L^0_{\tau_{y_1-h}(Y)}(Y)<L^0_\infty(Y)\}} \Big] \\
+\;&
\ee_x\Big[
\int_{0}^{t\wedge L^0_\infty(Y)} 
\Big(\partial^+_y u_f(X(T_s), 0) + m_0\,\mathcal{L}_x u_f(X(T_s), 0)  \Big) 
\dd s\,
{\bf 1}_{\{L^0_{\tau_{y_1-h}(Y)}(Y)\ge L^0_\infty(Y)\}}  \Big],
\end{align*}
which converges to 
\[
\ee_x\Big[
\int_{0}^{t\wedge L^0_\infty(Y)} 
\Big(\partial^+_y u_f(X(T_s), 0) + m_0\,\mathcal{L}_x u_f(X(T_s), 0)  \Big) 
\dd s \Big],
\]
when $h\downarrow 0$.

Next, it is easy to see that,
for almost every $\omega\in\{T_t<\tau_{y_1-h}(Y)\}$,
there exists $N(\omega)$ such that
$t_N(\omega)\,=\,T_t(\omega)$, for all $N\ge N(\omega)$.
Also, since $\hat{m}([0,y_1-h])<+\infty$, we know that
$\tau_{y_1-h}(Y)$ is almost surely finite,
and therefore 
$\{T_t\ge\tau_{y_1-h}(Y)\}\stackrel{a.s.}{=}\{T_t>\tau_{y_1-h}(Y)\}$,
because the process $Y$ cannot be at zero and $y_1-h$ at the same time.
As a consequence,
for almost every $\omega\in\{T_t\ge\tau_{y_1-h}(Y)\}$,
there exists $N(\omega)$ such that
$t_N(\omega)\,=\,\tau_{y_1-h}(Y)(\omega)$, for all $N\ge N(\omega)$,
and we obtain that
\begin{align*}
\lim_{N\uparrow\infty}\,
&\ee_x[\,u_f(X(t_N),Y(t_N))-f(x)\,]\\
\,=\,&
\ee_x[\Big(f(X(T_t))-f(x)\Big){\bf 1}_{\{T_t<\tau_{y_1-h}(Y)\}}]\\
\,+\,&
\ee_x[\Big(u_f(X(\tau_{y_1-h}(Y)),y_1-h)-f(x)\Big){\bf 1}_{\{T_t\ge\tau_{y_1-h}(Y)\}}],
\end{align*}
by dominated convergence, only using boundedness of $u_f$, but not continuity.

Recall that 
$L^0_{\tau_{y_1}(Y)}(Y)\stackrel{a.s.}{=}L^0_\infty(Y)$,
which implies $\pp_{\!\!x}(\{T_t<\infty\}\setminus\{T_t<\tau_{y_1}(Y)\})=0$,
and hence
\begin{align*}
\lim_{h\downarrow 0}\,
\ee_x[\Big(f(X(T_t))-f(x)\Big){\bf 1}_{\{T_t<\tau_{y_1-h}(Y)\}}]
\,=\,&\lim_{h\downarrow 0}\,
\ee_x[\Big(f(X(T_t))-f(x)\Big)
{\bf 1}_{\{T_t<\tau_{y_1-h}(Y)\}\cap\{T_t<\infty\}}] \\
\,=\,&\,
\ee_x[\Big(f(X(T_t))-f(x)\Big){\bf 1}_{\{T_t<\infty\}}]
\end{align*}
as well as
\[
\lim_{h\downarrow 0}\,
\ee_x[\,f(x)\,{\bf 1}_{\{T_t\ge\tau_{y_1-h}(Y)\}}]
\,=\,
\ee_x[\,f(x)\,{\bf 1}_{\{T_t=\infty\}}],
\]
both by dominated convergence.

Eventually, when treating the remaining limit
\begin{equation}\label{toVanish}
\lim_{h\downarrow 0}\,
\ee_x[\,u_f(X(\tau_{y_1-h}(Y)),y_1-h)\,{\bf 1}_{\{T_t\ge\tau_{y_1-h}(Y)\}}],
\end{equation}
we have to differ between the two cases (b1) and (b2),
where $\tau_{y_1}(Y)<+\infty$, a.s., in case (b1), by Lemma \ref{analytic for Y}.

By dominated convergence, we only have to discuss
$\lim_{h\downarrow 0}\,|u_f(X(\tau_{y_1-h}(Y)),y_1-h)|$,
because if this limit vanishes almost surely then so does (\ref{toVanish}).
However, $\lim_{h\downarrow 0}\,|u_f(X(\tau_{y_1-h}(Y)),y_1-h)|$
trivially vanishes, when applying the assumptions made in either (b1) or (b2).

On the whole, we have justified that
\[
\ee_x[\,f(X(T_t)){\bf 1}_{\{T_t<\infty\}}-f(x)\,]
\,=\,
\ee_x\Big[
\int_{0}^{t\wedge L^0_\infty(Y)} 
\Big(\partial^+_y u_f(X(T_s), 0) + m_0\,\mathcal{L}_x u_f(X(T_s), 0)  \Big) 
\dd s \Big],
\]
where
\begin{align*}
\mbox{$\lim_{t\downarrow 0}$}\,
&
\ee_x\Big[\,
\frac{1}{t}
\int_0^{t\wedge L^0_\infty(Y)}\left(\rule{0pt}{12pt}
\partial_y^+ u_f(X(T_s),0)+m_0\,{\cal L}_x u_f(X(T_s),0)
\right)\dd s
\,\Big]\\
=&\,
\partial_y^+ u_f(x,0)+m_0\,{\cal L}_x u_f(x,0),
\end{align*}
by the same arguments as in the proof of part (a),
only taking into account that, when $t\downarrow 0$,
for almost every $\omega\in\Omega$, it will eventually happen that
$t<L^0_\infty(Y)(\omega)$.

Since $f(X(T_t)){\bf 1}_{\{T_t=\infty\}}=f(\dagger){\bf 1}_{\{T_t=\infty\}}=0$,
the proof is complete in both cases (b1) and (b2).

(c) By Lemma \ref{analytic for Y}, the point $y_1<\infty$ is not absorbing,
and $\hat{m}([0,y_1])<+\infty$ implies both
$L^0_\infty(Y)\,\stackrel{a.s.}{=}\,+\infty$
as well as $\psi(0)=0$, as in the proof of part (a).

Now, fix $x\in\bfR^d,\,t>0$, and define $t_N$ by (\ref{defi t_N}), 
but using $S'_N=S_N$, instead---there is no localisation needed w.r.t.\ $Y$
because of Corollary \ref{BCyreg}.
%By Remark \ref{goody}, $\partial_y^+ u_f(\cdot,0)\in C_b(\bfR^d)$ implies that
Since $\partial_y^- u_f(\cdot,y_1-)$ is continuous,
Lemma \ref{ito}(c) yields
\begin{align*}
&%\,\stackrel{a.s.}{=}\,
u_f(X(t_N),Y(t_N))\,{\bf 1}_{\{Y(t_N)<y_1\}}
+u_f(X(t_N),y_1-)\,{\bf 1}_{\{y_1\}}(Y(t_N))-f(x)  \\
&\stackrel{a.s.}{=}
\int_0^{t_N}\partial_y^+ u_f(X(s),0)\,\dd L_s^0(Y) 
\,+
\int_0^{t_N}{\cal L}_x u_f(X(s),0)\,{\bf 1}_{\{0\}}(Y(s))\,\dd s\\
&\;
-\int_0^{t_N}\partial_y^- u_f(X(s),y_1-)\,\dd L_s^{y_1}(Y) 
\,+
\int_0^{t_N}{\cal L}_x u_f(X(s),y_1-)\,{\bf 1}_{\{y_1\}}(Y(s))\,\dd s
\,+\,\mbox{mart}(t_N),
\end{align*}
again using $\mbox{mart}(t_N)$ for the sum of the stochastic integrals, at stopping time $t_N$.

As in part (a) of the proof,
for almost every $\omega\in\Omega$, there exists $N(\omega)$ such that
$t_N(\omega)=T_t(\omega)$, %\,\&\,Y(t_N(\omega))(\omega)=0$,
for all $N\ge N(\omega)$, and hence
\[
\lim_{N\uparrow\infty}
u_f(X(t_N),y_1-)\,{\bf 1}_{\{y_1\}}(Y(t_N))
\,\stackrel{a.s.}{=}\,
0,
\]
because $Y$ cannot be at zero and $y_1$ at the same time.
Furthermore, 
\begin{align*}
&\int_0^{t_N}\partial_y^- u_f(X(s),y_1-)\,\dd L_s^{y_1}(Y) 
\,-
\int_0^{t_N}{\cal L}_x u_f(X(s),y_1-)\,{\bf 1}_{\{y_1\}}(Y(s))\,\dd s\\
&\stackrel{a.s.}{=}
\int_0^{t_N}\left(\rule{0pt}{12pt}
\partial_y^+ u_f(X(s),y_1-)-m_1\,{\cal L}_x u_f(X(s),y_1-)
\right)\dd L_s^{y_1}(Y),
\end{align*}
again by \cite[Theorem (5.27)]{AS1998},
and the last integral vanishes by assumption, in case (c).

All in all, we obtain that
\[
\lim_{N\uparrow\infty}\,
\ee_x[u_f(X(t_N),Y(t_N))-f(x)]
\,=\,
\lim_{N\uparrow\infty}\,
\ee_x\left[
\int_0^{t_N}\left(\rule{0pt}{12pt}
\partial_y^+ u_f(X(s),0)+m_0\,{\cal L}_x u_f(X(s),0)
\right)\dd L_s^0(Y)
\right]\!,
\]
and the rest is identical to the proof of (a).	
\end{proof}

\begin{proof}[Proof of \textbf{Corollary \ref{fundamental solution}}]
Fix $(x,y)\in\bfR^d\times(0,y_1)$, and write $X^x,Y^y,u(x,y)$
instead of $X,Y,f(x)$ whenever the notation $X,Y,f(x)$
is used in the formulation of Lemma \ref{ito}.
It is readily checked that our proof of Lemma \ref{ito}
works for these starting values, as well.

Next, recall the stopping times $\tau_N,S_N$ introduced at the beginning
of the proof of Lemma \ref{ito}, and define
\[
t_N\,=\left\{\begin{array}{ccc}
\tau_{\frac{1}{N}}(Y^y)\wedge\tau_N\wedge S_N\wedge\tau_{y_1-\frac{1}{N}}(Y^y)
&:&\mbox{$y_1<\infty$ absorbing},\\
\rule{0pt}{12pt}
\tau_{\frac{1}{N}}(Y^y)\wedge\tau_N\wedge S_N\wedge\tau_N(Y^y)
&:&\mbox{else}.
\end{array}\right.
\]
By Lemma \ref{analytic for Y}, the phrase
`$y_1<\infty$ absorbing' refers to the cases (b1) \&\ (b2) 
in Theorem \ref{trace_gen_theorem}, with (a) \&\ (c) being the remaining cases.
We will always choose $N$ big enough ensuring $1/N<y<y_1-1/N$.

The idea of proof, in all four cases, consists of the following steps:
apply Lemma \ref{ito} to $u_f(X^x(t_N),$ $Y^y(t_N))$,
take expectations on both sides,
and eventually take the limit $N\to\infty$.

Of course, taking expectations removes all stochastic integrals,
by our choice of $t_N$. Furthermore, all integrals involving
$\partial_y^+ u_f(X^x(s),0)$ or ${\cal L}_x u_f(X^x(s),0)$
disappear, because $Y^y(s)>0$, for all $s\le t_N$.
In both cases (b1) \&\ (b2), all terms involving `$y_1-$' disappear,
because $s<\tau_{y_1}(Y^y)$, for all $s\le t_N$.
In case (c), the integrals involving
$\partial_y^- u_f(X^x(s),y_1-)$ or ${\cal L}_x u_f(X^x(s),y_1-)$
disappear as in the proof of Theorem \ref{trace_gen_theorem},
using the assumed boundary condition.

All in all, we arrive at 
\begin{equation}\label{after exp}
u(x,y)\,=\left\{\begin{array}{ccl}
\ee[\,u_f(X^x(t_N),Y^y(t_N))\,]&:&\mbox{(a),(b1),(b2)},\\
\rule{0pt}{20pt}
\ee\left[\begin{array}{c}
u_f(X^x(t_N),Y^y(t_N)){\bf 1}_{\{Y^y(t_N)<y_1\}}\\
+u_f(X^x(t_N),y_1-){\bf 1}_{\{y_1\}}(Y^y(t_N))
\end{array}\right]
&:&\mbox{(c)},
\end{array}\right.
\end{equation}
before taking the limit $N\to\infty$.

Let us first consider the cases (a) \&\ (c).

Here, it follows  from the construction of $Y$ given in \cite[Chapter VI]{AS1998}
that both
\[
\pp(\{\tau_0(Y^y)<\infty\})\,=\,1
\quad\mbox{and}\quad
\tau_{\frac{1}{N}}(Y^y)\uparrow\tau_0(Y^y),\,N\to\infty,\,\mbox{a.s.}
\]
Thus, since $\tau_N\wedge S_N\wedge\tau_N(Y)$ grows to infinity, $N\to\infty$,
we also have that, for almost every $\omega\in\Omega$, there exists $N(\omega)$ 
such that $t_N(\omega)=\tau_{\frac{1}{N}}(Y^y(\omega))$, for all $N\ge N(\omega)$.

So, when taking the limit $N\to\infty$ on the right-hand side of (\ref{after exp}),
at first, one can interchange limit and expectation, since $u_f$ is bounded.
Then, in case (c), the limit of the term involving `$y_1-$' must vanish,
so that in both cases (a) \&\ (c) the limit will be
$\ee[\,f(X^x(\tau_0(Y^y)))\,]$,
because of the continuity of $u_f$ at the boundary $\bfR^d\times\{0\}$.

It remains to prove the corollary in case of (b1) \& (b2),
and it obviously suffices to show that
\begin{equation}\label{limit-b1-b2}
\renewcommand{\arraystretch}{.5}
u_f(X^x(t_N),Y^y(t_N))
\begin{array}{c}
\scriptstyle a.s.\\
\longrightarrow\\
\scriptstyle N\to\infty\end{array}
f(X^x(\tau_0(Y^y))){\bf 1}_{\{\tau_0(Y^y)<\tau_{y_1}(Y^y)\}}.
\renewcommand{\arraystretch}{1.}
\end{equation}

Here, the construction of $Y$ given in \cite[Chapter VI]{AS1998} implies
almost-sure-convergence of
\[
\tau_{\frac{1}{N}}(Y^y)\uparrow\tau_0(Y^y),\,\mbox{on $\{\tau_0(Y^y)<\tau_{y_1}(Y^y)\}$},
\quad\mbox{as well as}\quad
\tau_{y_1-\frac{1}{N}}(Y^y)\uparrow\tau_{y_1}(Y^y),
\]
when $N\to\infty$, so that (\ref{limit-b1-b2}) follows from
the behaviour of $u_f$ at the boundary $\bfR^d\times\{0,y_1\}$.
\end{proof}
\section{Discussion of our Conditions}
In this section we mainly verify the conditions of Theorem \ref{trace_gen_theorem}
in the case considered by Kwa\'snicki \&\ Mucha in \cite{KM2018},
i.e.\ ${\cal L}_x=\Delta_x$,
showing that applying our stochastic method covers a significant part
of the most general result
on the extension technique for functions of the Laplace operator.
We will nevertheless comment on the situation 
where ${\cal L}_x$ is a PDE operator more general than $\Delta_x$, too.
However, a detailed  analysis of solutions to the corresponding PDE (\ref{PDE})
is rather involved and would go beyond the scope of this paper.

First, Theorem \ref{trace_gen_theorem} is based on the existence
of a solution to (\ref{PDE}),
which is an elliptic PDE with coefficients determined by ${\cal L}_x$ and $\hat{m}$.

Here, the operator ${\cal L}_x$ is determined by the coefficients
of the SDE (\ref{SDE}), and what we need in the proof of Lemma \ref{ito},
among other things, is that
(\ref{SDE}) has a global solution, and that (\ref{PDE}) has a solution $u$
such that $u(\cdot,y)\in C^2(\bfR^d)$, for all $y\in(0,y_1)$.
So, as a minimum-condition, we require the coefficients of (\ref{SDE})
to be continuous knowing that for the wanted level of smoothness
of $u$ in the $x$-variable one would probably need smoother 
coefficients, even. We did not specify any further conditions
ensuring existence of a global solution to (\ref{SDE}),
leaving the reader with choice. The standard condition would be
linear growth, i.e.\ 
\begin{equation}\label{linearGrowth}
\|a(x)\|\,+\,\|\sigma(x)\|\,\le\,const\,(1+\|x\|),
\quad x\in\bfR^d.
\end{equation}
But, any SDE of type (\ref{SDE}) with continuous coefficients
and global solutions could be chosen for our setup,
as long as the resulting solution to (\ref{PDE}) is good enough, too.

The remaining coefficient of the PDE (\ref{PDE}) is the restriction of 
the string $\hat{m}$ to $(0,y_1)$.
Excluding the two trivial strings does not put a constraint on the general case of possible strings
as one could easily cover these two cases by other means.
Forcing the measure associated with the string to satisfy (S4), 
though, excludes a class of non-trivial strings.
But, our proofs rely upon It\^o's formula, which means that $Y$ should possess
a semimartingale decomposition, and we achieve this decomposition by
requiring $Y$ to satisfy a stochastic differential equation.
Since (S4) is a necessary condition for $Y$ to have this structure,
see \cite[Theorem (7.9)]{AS1998},
the excluded class of Krein strings cannot be covered by our technique.

Second, in order to be able to apply Theorem \ref{trace_gen_theorem},
for given ${\cal L}_x$ 
and non-trivial Krein string $\hat{m}$ satisfying (S4),
one would have to understand both regularity,
and behaviour near the boundary of $\bfR^d\times(0,y_1)$,
of solutions to (\ref{PDE}).
One way of looking into this problem would be via fundamental solutions,
and Corollary \ref{fundamental solution} suggests the following approach.

Suppose the diffusion $X$ has got absolutely continuous
transition probabilities $p_t(x,x')$, i.e.
\[
\ee[\,f(X^x(t))\,]\,=\int_{\bfR^d}p_t(x,x')f(x')\,\dd x',
\quad x\in\bfR^d,\,t\ge 0,
\]
where $f:\bfR^d\to\bfR$ can be any bounded measurable function.
Then, Corollary \ref{fundamental solution} would yield
\[
\frac{\dd\pi(x,y,\dd x')}{\dd x'}\,=
\int_{[0,\infty)}p_t(x,x')\,\gamma_y(\dd t),
\quad\mbox{in case of (a),(b2),(c)},
\]
and
\[
\frac{\dd\pi(x,y,\dd x')}{\dd x'}\,=
\int_{(0,\infty]}\int_{[0,t')}p_t(x,x')\,\gamma_y(\dd t,\dd t'),
\quad\mbox{in case of (b1)},
\]
where $\gamma_y(\dd t)$ is the law of $\tau_0(Y^y)$,
and $\gamma_y(\dd t,\dd t')$ is the joint law of $(\tau_0(Y^y),\tau_{y_1}(Y^y))$, 
respectively. Here, the laws $\gamma_y(\dd t)$ and $\gamma_y(\dd t,\dd t')$
are fully determined  by $\hat{m}$ and do not depend on $p_t(x,x')$.
Therefore, if there are special transition probabilities $p_t^\star(x,x')$
such that the solutions to (\ref{PDE}) determined by the corresponding
PDE operator satisfy the assumptions of Theorem \ref{trace_gen_theorem},
for all non-trivial Krein strings satisfying (S4), 
then the same would hold true
for any PDE operator ${\cal L}_x$ whose associated
transition probabilities $p_t(x,x')$ `behave' like $p_t^\star(x,x')$.
%the assumptions of Theorem \ref{trace_gen_theorem}
%would also be satisfied by all solutions to (\ref{PDE}),
%determined by any allowed Krein string $\hat{m}$,
%and any PDE operator ${\cal L}_x$ whose associated
%transition probabilities $p_t(x,x')$ `behave' like $p_t^\star(x,x')$.

In the remaining part of this section, the special PDE operator will be 
the Laplace operator $\Delta_x$, and hence the associated special
transition probabilities $p_t^\star(x,x')$ are given by the heat kernel.
The above principle simply tells that our Theorem \ref{trace_gen_theorem}
should at least hold true in those cases where 
$p_t(x,x')$ associated with ${\cal L}_x$
is sufficiently close to the heat kernel, in some sense.

In the case of ${\cal L}_x=\Delta_x$ it is rather easy to verify the conditions
of Theorem \ref{trace_gen_theorem}. Indeed,
taking the Fourier transform
\[
\hat{u}(\xi,y)
\,=
\int_{\bfR^d}e^{-{\rm i}\,\xi\cdot x}\,u(x,y)\,\dd x
\]
turns the PDE (\ref{PDE}) into a family of linear 2nd order ODEs,
\[
\partial^2_y\,\hat{u}(\xi,\cdot)
\,=\,
\|\xi\|^2\,\hat{u}(\xi,\cdot)\times\hat{m}(\dd y),
\quad\mbox{in}\quad(0,y_1),
\]
which can be solved separately, for each $\xi\in\bfR^d$.
The theory of solutions to such ODEs is closely linked
to Krein's spectral theory, and this link helped
Kwa\'snicki \&\ Mucha in \cite{KM2018} to establish their
extension technique for complete Bernstein functions of the Laplacian $\Delta_x$.
In what follows, we compare their results with ours.

We at first introduce the solutions to the PDE considered in \cite{KM2018}.

Let $\hat{m}$ be a non-trivial Krein string
identified with a locally finite measure on $([0,R),{\cal B}([0,R)))$,
as discussed at the beginning of Section \ref{rr}.
Denote by $f_N(\lambda,y)$ and $f_D(\lambda,y)$ the solutions
to the integral equations
\[
f_N(\lambda,y)
\,=\,
1+\lambda\int_0^y\dd y'\int_{[0,y']}f_N(\lambda,z)\hat{m}(\dd z),
\quad y\in[0,R),
\]
and
\[
f_D(\lambda,y)
\,=\,
y+\lambda\int_0^y\dd y'\int_{[0,y']}f_D(\lambda,z)\hat{m}(\dd z),
\quad y\in[0,R),
\]
respectively, for any fixed $\lambda>0$, and define
\begin{equation}\label{ratioLim}
\psi(\lambda)
\,=\,
\lim_{y\uparrow R}\,\frac{f_N(\lambda,y)}{f_D(\lambda,y)},
\quad\lambda>0.
\end{equation}
Then, for any $f\in L^2(\bfR^d)$,
\begin{equation}\label{KMsolution}
\hat{u}(\xi,y)
\,=\,
\phi(\|\xi\|^2,y)\,\hat{f}(\xi),
\quad(\xi,y)\in\bfR^d\times(0,R),
\end{equation}
where
\[
\phi(\lambda,y)
\,=\,
f_N(\lambda,y)-\psi(\lambda)\,f_D(\lambda,y),\;\lambda>0,
\quad\mbox{and}\quad
\phi(0,y)=1-\psi(0)y,
\]
gives the Fourier transform of a weak solution to
\[
\Delta_x u\times\hat{m}(\dd y)+\partial_y^2 u\,=\,0,
\quad\mbox{in}\quad
\bfR^d\times(0,R),
\]
in the sense of distributions.
\begin{remark}\rm
(a) This statement on weak solutions 
constitutes the first part of \cite[Theorem A.2]{KM2018}.
%and the properties of $\phi(\lambda,y)$ justifying this statement
%are discussed in \cite[Subsection A.3]{KM2018}.
Note that there is a typo in their definition of $\phi(\lambda,y)$
given in \cite[Subsection A.3]{KM2018},
which is `$\phi_\lambda(s)$' in their notation: 
they wrote $(\psi(\lambda))^{-1}$
instead of $\psi(\lambda)$. Another typo affects the expression
for $\phi(0,y)$ given in \cite[Subsection A.3]{KM2018}: they wrote $\psi(\lambda)$
instead of $\psi(0)$. Of course, $\psi(0)$ is short for 
$\lim_{\lambda\downarrow 0}\psi(\lambda)$.

(b) By Krein's correspondence, see \cite{K1952,KW1982},
any $\psi$ defined above is a complete Bernstein function,
that is a function which admits a representation as given in Lemma A.1
with respect  to an arbitrary Borel measure $\mu$ on $[0,\infty)$
satisfying $\int_{[0,\infty)}\mu(\dd\lambda')/(1+\lambda')<+\infty$.
Vice versa, any complete Bernstein function, except $\psi\equiv 0$,
can be given via (\ref{ratioLim}), using a non-trivial Krein string.

(c) We are aware of other definitions of complete Bernstein functions,
but they are of course equivalent to ours.
\end{remark}

Next, in \cite{KM2018}, the operators $\psi(-\Delta_x)$
are defined via Fourier transform, i.e.
\[
\mbox{\Large $\widehat{\scriptstyle[\psi(-\Delta_x)f]}$}(\xi)
\,=\,
\psi(\|\xi\|^2)\,\hat{f}(\xi),
\quad \xi\in\bfR^d.
\]
Both $\Delta_x$ and $\psi(-\Delta_x)$ are considered operators
on $L^2(\bfR^d)$, with respect to the Lebesgue measure,
and the domain of $\psi(-\Delta_x)$ consists of all 
$f\in L^2(\bfR^d)$ such that $\psi(|\cdot|^2)\,\hat{f}\in L^2(\bfR^d)$.

It is then stated in \cite[Theorem A.2]{KM2018} that,
for any complete Bernstein function $\psi\not\equiv 0$,
and any $f$ in the $L^2(\bfR^d)$-domain of $\psi(-\Delta_x)$,
the solution $u$ given by (\ref{KMsolution}) satisfies
\begin{equation}\label{theirD2N}
-\psi(-\Delta_x)f
\,=\,
\lim_{y\downarrow 0}\,\partial_y u(\cdot, y),
\quad\mbox{in}\quad L^2(\bfR^d).
\end{equation}
%% it looks like psi=0 would be fine, too, but the string
%% corresponding to psi=0 is identical to infinity and so
%% the equation for u would not make sense.
\begin{remark}\rm
Recall that the coefficients of the PDE (\ref{PDE}) are determined
by ${\cal L}_x$, and the \underline{restriction} of $\hat{m}$ to $(0,y_1)$.
Thus, the value of $m_0$ cannot be recovered  by taking right-hand
limits at zero of partial derivatives of solutions to (\ref{PDE}). 
This observation is important because the complete Bernstein function $\psi$
used in Corollary \ref{ourD2N} actually depends on $m_0$,
so that $-\psi(-{\cal L}_x)$ might not map 
the Dirichlet boundary condition to the Neumann boundary condition, 
if $m_0>0$, and we indeed see an $m_0$-related  correction-term 
in the conclusion of both 
Theorem \ref{trace_gen_theorem} and Corollary \ref{ourD2N}.
\end{remark}

By the same reasoning, (\ref{theirD2N}) should only be true
if the representation of the complete Bernstein function $\psi$
does not involve a positive $m_0$.
The minor mistake leading to this issue
can be traced back to \cite[Subsection A.3]{KM2018}:
in their notation, the authors claim that $-\phi'_\lambda(0)=\psi(\lambda)$,
which is not always true.
To correct their statement, a limit corresponding to 
our term $m_0\,{\cal L}_x u_f(x,0)$ in Corollary \ref{ourD2N} 
should be added on the right-hand side of (\ref{theirD2N}).

Otherwise, if $m_0=0$ and ${\cal L}_x=\Delta_x$,
our result under Remark \ref{DtoNtrue}(c) looks very similar
to (\ref{theirD2N}), though there are differences.
The obvious one is that the limit on the 
right-hand side of (\ref{theirD2N}) is in $L^2(\bfR^d)$,
while our limit 
$\partial_y^+ u_f(x,0)=\lim_{y\downarrow 0}\partial_y^+ u(x,y)$
is pointwise in $x\in\bfR^d$.
Furthermore, the solution $u$ used in (\ref{theirD2N}) solves
the PDE in $\bfR^d\times(0,R)$, while our solution solves
the PDE in $\bfR^d\times(0,y_1)$,
and our notion of solution is stronger.
Last but not least, equality (\ref{theirD2N}) holds true whenever
$f$ is in the $L^2(\bfR^d)$-domain of $-\psi(-\Delta_x)$,
while we require a whole range of assumptions for our result
of the same kind. However, we will see below that these 
are only two different pictures of the same result,
if the Dirichlet data $f=u_f(\cdot,0)$ is regular enough.

Let ${\cal S}(\bfR^d)$ be the Schwartz space of rapidly decreasing functions,
which is a core for $\Delta_x$ on $L^2(\bfR^d)$.
Fix a non-trivial Krein string $\hat{m}$,
choose $f\in{\cal S}(\bfR^d)$,
and define $\psi$ \&\ $\hat{u}$ via (\ref{ratioLim}) \&\ (\ref{KMsolution}), 
respectively.

We now verify that
\[
u(x,y)
\,=\,
\frac{1}{(2\pi)^d}\int_{\bfR^d}e^{{\rm i}\,x\cdot\xi}\,\hat{u}(\xi,y)\,\dd\xi,
\quad (x,y)\in\bfR^d\times(0,R),
\]
induces a solution to (\ref{PDE}) in the sense of Definition \ref{solution}
satisfying the conditions of Corollary \ref{ourD2N}.
For those properties of $\phi(\lambda,y)$ which we are going to use without
proof or further reference, the reader is referred to \cite[Subsection A.3]{KM2018}.

First, since $f\in{\cal S}(\bfR^d)$, and since
$\phi(\lambda,y)$ is bounded but also continuous in $y$,
all partial derivatives $\partial_{i,j}\,u,\,1\le i,j\le d$,
exist and are jointly continuous on $\bfR^d\times(0,R)$,
by dominated convergence. Therefore, the restriction of $u$ to
$\bfR^d\times(0,y_1)$ is a solution to 
(\ref{PDE}) in the sense of Definition \ref{solution},
and this solution is bounded.

Second, observe that 
$\lim_{y\downarrow 0}\phi(\lambda,y)=1$, for all $\lambda\ge 0$,
by definition. Furthermore, if $R<\infty$, then 
$\lim_{y\uparrow R}\phi(\lambda,y)=0$, for all $\lambda\ge 0$,
where $y_1=R$.
If $R=\infty$ but $y_1<R$,
then $\lim_{y\uparrow y_1}\phi(\lambda,y)$ exists,
because $\phi(\lambda,y)$ is continuous in $y$, for all $\lambda\ge 0$.
As a consequence, again by dominated convergence,
the solution $u$ satisfies Assumption \ref{BCxreg}.
Of course, all extended functions are even bounded and continuous on
$\bfR^d\times\left(\rule{0pt}{11pt}[0,y_1]\cap\bfR\right)$.
Note that $\psi(0)=1/R$, if $R<\infty$, and $\psi(0)=0$ otherwise.

Third, it follows from the definitions of $f_N$ \&\ $f_D$ that,
for any fixed $\lambda\ge 0$,
\[
\partial^+_y\phi(\lambda,y)
\,=\,
-\psi(\lambda)+\lambda\int_{[0,y]}\phi(\lambda,z)\,\hat{m}(\dd z),
\quad y\in(0,R),
\]
and hence
\[
|\partial^+_y\phi(\lambda,y)|
\,\le\,
\psi(\lambda)+\lambda\,\hat{m}(\,[0,1\wedge\frac{y_1}{2}]\,),
\quad\lambda\ge 0,\;y\in(0,1\wedge\frac{y_1}{2}),
\]
because $|\phi(\lambda,z)|$ is bounded by one.
Here, the complete Bernstein function $\psi(\lambda)$ grows linearly in $\lambda$,
which can easily be derived from its representation given in Lemma A.1.
Therefore, $\partial_y^+ u_f(x,0)\in C_b(\bfR^d)$ follows by dominated convergence.
Since $\mathcal{L}_x u_f(\cdot, 0)\in C_b(\bfR^d)$ has already been verified
when checking Assumption \ref{BCxreg}, the extension $u_f$ satisfies
$\partial^+_y u_f(\cdot, 0) 
+
m_0\,\mathcal{L}_x u_f(\cdot, 0)\in C_b(\mathbb{R}^d)$.

Fourth, in both (b1) and (b2) it holds that $y_1<\infty$ but $\hat{m}([0,y_1])=+\infty$.
Therefore, $y_1=R<\infty$ and
$\lim_{y\uparrow R}\phi(\lambda,y)=0$, for all $\lambda\ge 0$,
so that the respective Dirichlet boundary conditions at $y_1$
in the cases (b1) and (b2) of Theorem \ref{trace_gen_theorem}
are satisfied, by dominated convergence.

Fifth, if $y_1<\infty$ but $\hat{m}([0,y_1])<+\infty$,
then $R=\infty$ and $\hat{m}(y)=const<\infty$, for all $y\in[y_1,\infty)$.
Thus $\mathcal{L}_x u_f(\cdot, y_1-)\in C(\bfR^d)$
because all partial derivatives $\partial_{i,j}\,u,\,1\le i,j\le d$,
are jointly continuous on $\bfR^d\times(0,R)$.
By dominated convergence,
for $\partial_y^- u_f(\cdot,y_1-) \equiv m_1\,{\cal L}_x u_f(\cdot,y_1-)$,
it remains to show that
\begin{equation}\label{wanted}
\lim_{y\uparrow y_1}\partial^+_y\phi(\lambda, y)
\,=\,
-\lambda\,m_1\,\phi(\lambda, y_1),
\end{equation}
for any fixed $\lambda>0$.
Observe that the definitions of $f_N,f_D$ yield
\[
f_N(\lambda,y)
\,=\,
f_N(\lambda,y_1)
+
\lambda(y-y_1)\int_{[0,y_1]}f_N(\lambda,z)\,\hat{m}(\dd z),
\quad y\in[y_1,\infty),
\]
and
\[
f_D(\lambda,y)
\,=\,
f_D(\lambda,y_1)
+
(y-y_1)
+
\lambda(y-y_1)\int_{[0,y_1]}f_D(\lambda,z)\,\hat{m}(\dd z),
\quad y\in[y_1,\infty),
\]
so that
\[
\psi(\lambda)
\,=\,
\frac{
\lambda\int_{[0,y_1]}f_N(\lambda,z)\,\hat{m}(\dd z)
}
{
1+\lambda\int_{[0,y_1]}f_D(\lambda,z)\,\hat{m}(\dd z)
}\,.
\]
On the other hand, since
\[
\lim_{y\uparrow y_1}\partial^+_y\phi(\lambda, y)
\,=\,
-\psi(\lambda)+\lambda\int_{[0,y_1)}\phi(\lambda,z)\,\hat{m}(\dd z),
\]
the wanted equality (\ref{wanted}) is equivalent to
\[
\psi(\lambda)
\,=\,
\lambda\int_{[0,y_1]}\phi(\lambda,z)\,\hat{m}(\dd z)\\
\,=\,
\lambda\int_{[0,y_1]}f_N(\lambda,z)\,\hat{m}(\dd z)
-
\lambda\psi(\lambda)\int_{[0,y_1]}f_D(\lambda,z)\,\hat{m}(\dd z),
\]
which is indeed true for $\psi(\lambda)$ as obtained above.
All in all, the boundary condition at $y_1$
required in case (c) of Theorem \ref{trace_gen_theorem}
is also satisfied.
\begin{remark}\rm
The above five steps, which are valid for arbitrary Krein strings $\hat{m}$,
show that, if the Dirichlet data $f$ is regular enough, 
then the solution $u$ constructed in \cite{KM2018} satisfies both
the condition for (\ref{theirD2N})
and all our conditions for 
Theorem \ref{trace_gen_theorem} \&\ Corollary \ref{ourD2N}.
For the purpose of demonstration,
we have chosen Dirichlet data $f$ from ${\cal S}(\bfR^d)$
which is a popular core for the generator $\Delta_x$ 
in several Banach spaces.
\end{remark}

Concluding this section, we eventually compare our results
with the results in \cite{K2019},
where Kwa\'snicki generalises his work with Mucha, \cite{KM2018},
to elliptic PDEs of type
\[
\partial_x^2 u\times\hat{m}(\dd y)+2\partial_{xy}u\times b(y)+\partial_y^2 u
\,=\,
0, \quad\mbox{in}\quad\bfR\times(0,R).
\]
The main difference to our work is that the coefficients of such PDEs
only depend on $y$. However, it would be very interesting to
study the Dirichlet-to-Neumann map associated with this type of PDE
via trace processes.
\section*{Appendix}
Recall the Krein string $\hat{m}$,
as well as the speed measure $m$ of the diffusion $Y$, introduced in Section \ref{rr}.
Again, $\hat{m}$ is used to denote both the string 
and the corresponding measure on $([0,y_1]\cap\bfR,{\cal B}([0,y_1]\cap\bfR))$.

Any Krein string can be uniquely associated with
a so-called characteristic function, by Krein's correspondence, \cite{K1952},
and this correspondence can be used to understand the structure 
of certain Laplace exponents.\\

\noindent
{\bf Lemma A.1.}\,{\it
The right-inverse local time $[L^0_t(Y)^{-1},\,t\ge 0]$ 
is a subordinator with Laplace exponent
\[
\psi(\lambda)\,=\,
\mu(\{0\})+m_0\,\lambda\,+\int_{(0,\infty)}\frac{\lambda}{\lambda+\lambda'}\,\mu(\dd\lambda'),
\quad\lambda>0,
\]
where $\mu$ is the measure used in the representation of
Krein's characteristic function associated with the right-inverse string $\hat{m}^{-1}$.
This measure satisfies $\int_{[0,\infty)}\mu(\dd\lambda')/(1+\lambda')<+\infty$.
}
%% using this integral for representing the laplace exponent is cool
%% because from this follows that the jump measure is absolutely
%% continuous with smooth density, and this has been worked out 
%% in KW1982---nice side effect of Krein's theory!
\begin{proof}
This result was proven in \cite{KW1982}, by Kotani \&\ Watanabe; 
we only have to clarify how the notation used in their proof relates to our notation
taken from \cite{AS1998}.
For this purpose, 
we extend our measure $\hat{m}$ to a measure on $(\bfR,{\cal B}(\bfR))$ by
\[
\hat{m}(O)\,\stackrel{def}{=}\,0,
\quad\mbox{for any open subset $O\subseteq(-\infty,0)\cup(y_1,\infty)$}.
\]

Observe that,
in both \cite[Chapter VI]{AS1998} and \cite{KW1982}, 
our diffusion $Y$ would be constructed 
by random time change of a Wiener process, $[W(t),\,t\ge 0]$, 
and the definition of the random time change would be based on the local time
of this Wiener process. 
Indeed, in both \cite[Chapter VI]{AS1998} and \cite{KW1982}, 
the authors first define
\[
A_t\,=\,\int_\bfR\frac{1}{2}\,L_t^y(W)\,\hat{m}(\dd y),\quad t\ge 0,
\]
and then construct $Y(t)=W(A_t^{-1}),\,t\ge 0$,
where $[A_t^{-1},\,t\ge 0]$ is the right-inverse of $[A_t,\,t\ge 0]$.
Here, we used the local time from \cite{AS1998}.
Note that the local time used in \cite{KW1982}
is half of our local time, and that the measure $m$ they used is NOT
the speed measure as in the present paper but 
the extension of $\hat{m}$ to a measure on $(\bfR,{\cal B}(\bfR))$, as defined above.

Furthermore, \cite[Theorem (5.27)]{AS1998} implies that
\[
\int_0^t{\bf 1}_\Gamma(Y(s))\,\dd s
\,=\,
\int_\Gamma L_t^y(Y)\,m(\dd y),
\quad t\ge 0,\;\mbox{a.s.},
\]
for any Borel set $\Gamma\subseteq[0,y_1]\cap\bfR$,
and this equality specifies the local time of $Y$ used in \cite{AS1998},
based on the speed measure $m$.
But, if $l_t^y(Y)$ denotes the local time of $Y$ used in \cite{KW1982}, then
\[
\int_0^t{\bf 1}_\Gamma(Y(s))\,\dd s
\,=\,
\int_\Gamma l_t^y(Y)\,\hat{m}(\dd y),
\quad t\ge 0,\;\mbox{a.s.},
\]
for any Borel set $\Gamma\subseteq[0,y_1]\cap\bfR$.

Therefore, since $m(\{0\})=\hat{m}(\{0\})$, if $m_0>0$,
then $L_t^0(Y)=l_t^0(Y),\,t\ge 0$, a.s., is obvious.
However, being an easy consequence of \cite[Lemma (6.34)]{AS1998},
this equality of the two local times at \underline{zero} also holds true when $m_0=0$.
Note that, $L_t^y(Y)=2\,l_t^y(Y),\,t\ge 0$, a.s., for all $y\in(0,y_1)$.
The case $y=0$ is different because zero is a special singular point for $Y$.
If $y_1$ was finite, then it would be a special singular point, too. 

All in all, the Laplace exponent obtained for the right-inverse of $l_t^0(Y)$ in \cite{KW1982} 
is indeed identical to the Laplace exponent of our right-inverse local time $L_t^0(Y)^{-1}$.
\end{proof}

\begin{proof}[Proof of \textbf{Corollary \ref{BCyreg}}]
We first show that $\partial_y^+ u(\cdot,y^\star)$ is locally bounded on $\bfR^d$,
for an arbitrary but fixed $y^\star\in(0,y_1)$.

Fix $y^\star\in(0,y_1)$, and assume the contrary. Then there exists a sequence
$(x_n)_{n=1}^\infty\subseteq\bfR^d$, which converges to some $x\in\bfR^d$,
such that $\sup_{n}|\partial_y^+ u(x_n,y^\star)|=+\infty$.
Without loss of generality, assume that
\[
\forall\,r>0\;\exists\,n_r\;\forall\,n\ge n_r:\;
\partial_y^+ u(x_n,y^\star)\,\ge\,r.
\]

Next, fix $y_\star\in(0,y^\star)$, and note that
\[
\partial_y^+ u(x_n,y)
\,=\,
\partial_y^+ u(x_n,y^\star)
\,+
\int_{(y,y^\star]}{\cal L}_x u(x_n,y')\,\hat{m}(\dd y'),
\]
for all $n\ge 0$, and all $y\in[y_\star,y^\star)$,
is an easy consequence of (\ref{L dm integral}).
By Assumption \ref{BCxreg}, but also using $\hat{m}(K)<+\infty$,
for any compact subset $K\subseteq[0,y_1)$,
as well as continuity of the coefficients of ${\cal L}_x$,
\[
c(y_\star,y^\star)
\,\stackrel{\mbox{\tiny def}}{=}\,
\sup_{n\ge 0}\sup_{y\in[y_\star,y^\star)}
|\int_{(y,y^\star]}{\cal L}_x u(x_n,y')\,\hat{m}(\dd y')\,|\,<\,\infty.
\]

Then, for any $r>0$,
\[
\partial_y^+ u(x_n,y)\,\ge\,r-c(y_\star,y^\star),
\quad
\forall\,n\ge n_r,\;\forall\,y\in[y_\star,y^\star],
\]
and hence
\begin{align*}
u(x_n,y_\star)
&\,=\,
u(x_n,y^\star)\,-\int_{y_\star}^{y^\star}\partial_y^+ u(x_n,y)\,\dd y\\
&\,\le\,
u(x_n,y^\star)+[c(y_\star,y^\star)-r]\times(y^\star-y_\star),
\quad\forall\,n\ge n_r.
\end{align*}
Of course, $\sup_{n\ge 0}|u(x_n,y^\star)|<+\infty$
since $u(\cdot,y^\star)$ is continuous and $x_n\to x\in\bfR^d,\,n\to\infty$,
so that $\limsup_{n\to\infty}u(x_n,y_\star)=-\infty$,
which contradicts the continuity of $u(\cdot,y_\star)$.

All in all $\partial_y^+ u(\cdot,y^\star)$ is indeed locally bounded on $\bfR^d$.
Therefore,
\[
\partial_y^+ u_f(x,0)
%\,\stackrel{\mbox{\tiny def}}{=}\,
\,=\,
\lim_{y\downarrow 0}\,\partial_y^+ u(x,y)
\,=\,
\mbox{$\lim_{y\downarrow 0}$}
\left(
\partial_y^+ u(x,y^\star)
\,+
\int_{(y,y^\star]}{\cal L}_x u(x,y')\,\hat{m}(\dd y')
\right),\quad x\in\bfR^d,
\]
is a locally bounded function on $\bfR^d$, too,
since ${\cal L}_x u_f$ is locally bounded on $\bfR^d\times[0,y_1)$.

Obviously, for fixed $x\in\bfR^d$, the extended version $\partial_y^+ u_f(x,\cdot)$
defined this way is right-continuous at zero, and thus it is c\`adl\`ag on $[0,y_1)$
because $u(x,\cdot)$ is a difference of two convex functions on the interior $(0,y_1)$,
finishing the proof of the corollary in the case of $\partial_y^+ u_f$.

In the case of $\partial_y^- u_f$, the extension can be given by
\[
\partial_y^- u_f(x,y_1-)
%\,\stackrel{\mbox{\tiny def}}{=}\,
\,=\,
\mbox{$\lim_{y\uparrow y_1}$}
\left(
\partial_y^+ u(x,y^\star)
\,-
\int_{(y^\star,y]}{\cal L}_x u(x,y')\,\hat{m}(\dd y')
\right),\quad x\in\bfR^d,
\]
though we omit the proof. %$X^x(\mathfrak{T}^{y})$
\end{proof}

\end{document}